\documentclass[12pt, a4paper]{amsart}
\usepackage{amscd,amsthm,amsfonts,amssymb,amsmath,euscript}
\usepackage[dvips]{graphicx}
\usepackage[dvips]{graphics}
\usepackage[matrix,arrow]{xy}
\usepackage{longtable}
\usepackage{color}
\usepackage{url}

\sloppy \pagestyle{plain}\binoppenalty=10000 \relpenalty=10000

\textwidth=16cm \textheight=25cm

\addtolength{\topmargin}{-50pt} \addtolength{\oddsidemargin}{-2cm}
\addtolength{\evensidemargin}{-2cm}

\newcounter{statements}

\newtheorem{theorem}[statements]{Theorem}
\newtheorem{conjecture}[statements]{Conjecture}
\newtheorem{problem}[statements]{Problem}
\newtheorem{question}[statements]{Question}

\newtheorem{lemma}[statements]{Lemma}

\newtheorem{corollary}[statements]{Corollary}
\newtheorem{proposition}[statements]{Proposition}

\theoremstyle{definition}
\newtheorem{example}[statements]{Example}
\newtheorem{definition}[statements]{Definition}

\theoremstyle{remark}
\newtheorem{compactificationconstruction}[statements]{Compactification construction}
\newtheorem{remark}[statements]{Remark}

\newcommand{\ZZ}{{\mathbb Z}}

\newcommand{\PP}{{\mathbb P}}
\newcommand{\CC}{{\mathbb C}}
\newcommand{\RR}{{\mathbb R}}

\newcommand{\TT}{{\mathbb T}}
\newcommand{\FF}{{\mathbb F}}
\newcommand{\Aff}{{\mathbb A}}

\newfont{\smallskob}{cmbx7 scaled\magstep4}
\newfont{\bigskob}{cmbx12 scaled\magstep4}

\newcommand{\Pic}{\mathrm{Pic}\,}

\newcommand{\Spec}{\mathrm{Spec}\,}

\newcommand{\cO}{\mathcal{O}}
\newcommand{\vol}{\mathrm{vol}\,}

\begin{document}


\title{Calabi--Yau compactifications of toric Landau--Ginzburg models for smooth Fano threefolds}

\author{Victor Przyjalkowski}

\thanks{This work is supported by the Russian Science Foundation under grant 14-50-00005.}

\address{Steklov Mathematical Institute of Russian Academy of Sciences, 8 Gubkina street, Moscow 119991, Russia} %

\email{victorprz@mi.ras.ru, victorprz@gmail.com}


\maketitle

\begin{abstract}
We prove that smooth Fano threefolds have toric Landau--Ginzburg models.
More precise, we prove that their Landau--Ginzburg models, presented as Laurent polynomials,
admit 
compactifications to families of K3 surfaces, and we describe their fibers over infinity. 
We also give an explicit construction
of Landau--Ginzburg models for del Pezzo surfaces and any divisors on them.
\end{abstract}




\bigskip

\section{Introduction}
The Mirror Symmetry conjecture, one of the deepest recent ideas in mathematics, relates varieties
or certain families of varieties --- so called Landau--Ginzburg models.
It states that every variety has a dual object whose symplectic properties correspond to
algebraic ones for the original variety and, vice versa, whose algebraic properties
correspond to symplectic ones for the original variety.

There are several levels of Mirror Symmetry conjecture. 
In this paper we study \emph{Mirror Symmetry Conjecture of variations of Hodge structures}.
It relates the (symplectic) Gromov--Witten invariants of Fano varieties (which
count the expected numbers of (rational) curves lying on the varieties) with
periods of dual algebraic families. Givental suggested dual Landau--Ginzburg models and proved
Mirror Symmetry Conjecture of variations of Hodge structures for smooth toric varieties and Fano complete intersections
therein, see~\cite{Gi97}.
Another description of Mirror Symmetry for toric varieties treats it as a duality
between toric varieties (or polytopes that define them), see, for instance,~\cite{Ba93}.
From this point of view Laurent polynomials appear naturally as anticanonical sections of toric varieties.

Based on these considerations a notion of \emph{toric Landau--Ginzburg model} was proposed in~\cite{Prz13}.
A toric Landau--Ginzburg model for a smooth Fano variety $X$ of dimension~$n$ and a chosen divisor $D$ on it is a Laurent polynomial $f_X$
in $n$ variables satisfying three conditions. The period condition relates periods of the family of fibers of the map $f_X\colon (\CC^*)^n\to \CC$ to the generating series of one-pointed Gromov--Witten invariants of~$X$ depending on $D$. The second one,
the Calabi--Yau condition, says that the fibers of the map $f_X$ can be compactified to Calabi--Yau varieties.
This condition is motivated by the so called Compactification Principle (see~\cite[Principle 32]{Prz13}) stating that a fiberwise compactification
of the family of the fibers of the toric Landau--Ginzburg model is a Landau--Ginzburg model
from the point of view of Homological Mirror Symmetry. Finally the toric condition states that there
is a degeneration of $X$ to a toric variety $T$ whose fan polytope 
is the Newton polytope of $f_X$. This condition is motivated by the treatment of Mirror Symmetry
for toric varieties as a duality between polytopes and by the deformation invariance of the Gromov--Witten invariants.

The existence of toric Landau--Ginzburg models has been shown for Fano threefolds of Picard rank one
and for complete intersections in projective spaces (see~\cite{Prz13} and~\cite{ILP13}). Aside from that only partial results are known.
In particular, the existence of Laurent polynomials satisfying the period condition for all smooth Fano threefolds (and trivial, that is equal to zero, divisors on them) with very ample anticanonical classes was
shown in~\cite{fanosearch}, and the toric condition for them is checked in forthcoming papers~\cite{IKKPS} and~\cite{DHKLP}.

The main goal of the paper is to complete the proof of the existence of toric Landau--Ginzburg models for smooth Fano threefolds,
in other words, to check the Calabi--Yau condition for them.
There are $105$ families of smooth Fano threefolds; anticanonical classes of $98$ of them are very ample.
We consider ``good'' toric degenerations of these, that is, the degenerations to Gorenstein toric varieties.
Given a Fano threefold $X$ and its Gorenstein toric degeneration $T$ we denote by
$\Delta$ its fan polytope.
It is reflexive, which means that its dual polytope $\nabla$ is integral.
Let us assume that a Laurent polynomial $f_X$, whose Newton polytope is $\Delta$,
is of Minkowski type, i.\,e., its coefficients correspond to expansions of facets of $\Delta$ to Minkowski sums
of elementary polygons (see Definition~\ref{definition: Minkowski}).
Assume also that $f_X$ satisfies the period condition for $X$ and a trivial divisor.
(According to~\cite{fanosearch} and~\cite{CCGK16}, such polynomials exist for all $98$ smooth Fano threefolds with very ample anticanonical classes.) The family of fibers of the map given by $f_X$ is a one-dimensional linear subsystem
in the anticanonical linear system of a toric variety $T^\vee$ whose fan polytope is $\nabla$.
Since $\Delta$ is three-dimensional and reflexive, $T^\vee$ has a crepant resolution.
One of the members of the family given by $f_X$ is a boundary divisor of $T^\vee$. The base locus of the family is a union of smooth rational curves  due to the special origin of coefficients of $f_X$.
This enables one to resolve the base locus, and, cutting out the boundary divisor, to get a Calabi--Yau compactification.
Moreover, this gives a \emph{log Calabi--Yau compactification}, that is a compactification to a compact variety
over $\PP^1$ whose anticanonical divisor is a fiber.
In addition one gets a description of ``the fiber over infinity'' of the compactified Landau--Ginzburg model.

\begin{theorem}[Theorem~\ref{theorem:main}]
Any Minkowski Laurent polynomial in three variables admits a log Calabi--Yau compactification.
\end{theorem}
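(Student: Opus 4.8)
The plan is to realise the fibrewise compactification of the map $f\colon(\CC^*)^3\to\CC$ inside a crepant resolution of the Gorenstein toric Fano threefold dual to the Newton polytope of $f$, and then to blow up the (mild) base locus of the resulting anticanonical pencil while keeping the general fibre anticanonical. Concretely, let $\Delta=\mathrm{Newt}(f)$, a three-dimensional reflexive polytope, let $\nabla=\Delta^\vee$ be its (integral) dual, and let $T^\vee$ be the toric variety whose fan polytope is $\nabla$. The monomials of $f$ are lattice points of $\Delta$, so $f$ and the constant $1$ span a pencil $\mathcal P\subset|-K_{T^\vee}|$: the torus $(\CC^*)^3\subset T^\vee$ is the complement of the boundary, $f$ is the rational map defined by $\mathcal P$, and the member over $\infty\in\PP^1$ is the toric boundary $\partial T^\vee$ (the divisor of the section corresponding to the interior lattice point). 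Since $\nabla$ is reflexive, $T^\vee$ is Gorenstein Fano, and, $\Delta$ being three-dimensional, it admits a crepant resolution $\rho\colon\widehat T\to T^\vee$ with $\widehat T$ smooth. Crepancy gives $-K_{\widehat T}=\rho^*(-K_{T^\vee})$, so $\mathcal P$ pulls back to a pencil $\widehat{\mathcal P}$ of anticanonical divisors on $\widehat T$; the embedding $(\CC^*)^3\hookrightarrow\widehat T$ and the map $f$ persist, and the member over $\infty$ is the reduced toric boundary $\partial\widehat T$.

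Next I would analyse the base locus. Two general members of $\widehat{\mathcal P}$ restrict on $(\CC^*)^3$ to disjoint level sets of $f$, so $\mathrm{Bs}\,\widehat{\mathcal P}$ is contained in the strict transform of $\partial T^\vee$; on the torus-invariant surface associated with a facet $\sigma$ of $\Delta$ every member of $\widehat{\mathcal P}$ cuts out the same curve, namely the zero locus of the facet Laurent polynomial $f_\sigma$ obtained from $f$ by restriction to $\sigma$. Here the Minkowski hypothesis enters: by Definition~\ref{definition: Minkowski} the coefficients of $f$ are read off from Minkowski decompositions of the facets of $\Delta$ into elementary lattice polygons, which forces each $f_\sigma$ to factor as a product of ``linear'' binomials and trinomials, each defining a smooth rational curve. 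One then checks, facet by facet and along the common edges, that these curves are reduced, smooth, and meet only transversally, so that $\mathrm{Bs}\,\widehat{\mathcal P}$ is a nodal curve whose components are smooth and rational; by Bertini the general member of $\widehat{\mathcal P}$ is smooth away from it.

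Finally I would resolve the pencil. If $Y$ is a smooth threefold, $|D|\subset|-K_Y|$ an anticanonical pencil and $C\subset\mathrm{Bs}|D|$ a smooth curve, then for the blow-up $\pi\colon\mathrm{Bl}_CY\to Y$ with exceptional divisor $E$ one has $K_{\mathrm{Bl}_CY}=\pi^*K_Y+E$, while the strict transform of the general member of $|D|$ is $\pi^*(-K_Y)-E=-K_{\mathrm{Bl}_CY}$: the pencil stays anticanonical and its base locus drops. Applying this repeatedly --- after first separating the components of $\mathrm{Bs}\,\widehat{\mathcal P}$ by blowing up the finitely many points where they meet --- I obtain, after finitely many blow-ups, a smooth projective threefold $Z$ on which the transform of $\widehat{\mathcal P}$ is base-point free, i.e.\ a morphism $\pi_Z\colon Z\to\PP^1$ whose general fibre $F$ is smooth (Bertini) with $F\sim-K_Z$, hence $K_F=(K_Z+F)|_F=0$ by adjunction, so $F$ is a smooth anticanonical K3 surface. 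The threefold $Z$ still contains $(\CC^*)^3$ as a dense open subset on which $\pi_Z=f$, and the transform of the toric boundary is the fibre over infinity; thus $(Z,\pi_Z)$, with $-K_Z$ linearly equivalent to a fibre, is the desired log Calabi--Yau compactification.

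The main obstacle I expect is the middle step together with the bookkeeping of the last one: one must verify that the Minkowski coefficients really do force $\mathrm{Bs}\,\widehat{\mathcal P}$ to be a nodal union of smooth rational curves, and then organise the auxiliary blow-ups so that the procedure terminates, the strict transforms of the general members stay smooth, and no new base points are created outside the exceptional loci. This is a local analysis on the torus-invariant surfaces of $\widehat T$: on each such surface one reads off the curve $\{f_\sigma=0\}$ from the elementary polygons of $\sigma$, determines its singularities and those of the ambient member, and tracks them through $\rho$ and the subsequent blow-ups --- and it is precisely here that reflexivity, three-dimensionality, and the Minkowski form of the coefficients are all genuinely used.
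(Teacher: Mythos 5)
Your route is the paper's own: compactify the pencil spanned by $f$ and the constant polynomial inside $T^\vee$, pass to the crepant maximal resolution $\widetilde T^\vee$ (Lemma~\ref{lemma: 3dim smoothness}), identify the base locus with the zero loci of the facet polynomials $f|_Q$, and then repeatedly blow up smooth base curves, each such blow-up preserving the property that the members of the pencil are anticanonical. The genuine gap is in your middle step: the claim that the base curves are \emph{reduced} and form a nodal curve is false in general, and it is not a harmless simplification. An admissible lattice Minkowski decomposition may have repeated summands ($Q_r=Q_s$), in which case $f|_Q$ acquires a square factor and the corresponding base curve occurs with multiplicity at least $2$; this is precisely why Lemma~\ref{lemma:restrictions Minkowski} and Proposition~\ref{proposition:CY compactification} of the paper carry the clause ``possibly with multiplicities''. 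For a multiple base curve your assertion that after blowing it up ``its base locus drops'' does not hold: the strict transform of the general member still meets the exceptional divisor along a curve, so a new smooth base curve appears on $E$, and one must iterate and prove termination (the paper does this by invoking that the pair is canonical). Without an argument handling these multiplicities and the residual base curves, your induction does not close.

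A second, smaller but real, problem is the auxiliary step of ``first separating the components by blowing up the finitely many points where they meet''. This is both unnecessary and harmful: a point blow-up of a smooth threefold has discrepancy $2$, while the general member of the pencil can perfectly well be smooth at a transversal intersection point of two base curves (a smooth surface may contain two curves meeting at a point), so its strict transform is $\pi^*(-K_W)-E$, which differs from $-K_{W'}=\pi^*(-K_W)-2E$; after such a blow-up the pencil is no longer anticanonical and the desired identity $-K_Z=f^{-1}(\infty)$ is lost. The paper's Proposition~\ref{proposition:CY compactification} blows up only smooth curves, one at a time, and transversal crossings of base curves cause no difficulty for that procedure; the one global hypothesis it needs is that the base locus contains no intersection $D_1\cap D_2$ of boundary components, i.e.\ no torus-invariant stratum, which follows from the non-vanishing of the coefficients of $f$ at the vertices of $\Delta$ (Fact $4$). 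You gesture at this (``along the common edges'') but should state and use it explicitly, since it is exactly what licenses the repeated curve blow-ups that keep the fibre over infinity anticanonical.
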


We also construct ``by hand'' Calabi--Yau compactifications for Laurent polynomials corresponding to seven Fano threefolds
whose anticanonical classes are not very ample.
More precisely, 
we compactify the families to families of (singular) quartics in $\PP^3$ or to hypersurfaces of bidegree $(2,3)$ in $\PP^1\times \PP^2$, which
have crepant resolutions. 

In the proof of Theorem~\ref{theorem:main} we do not use the fact that the variety $T$ is a toric degeneration of $X$.
This fact is proved in forthcoming papers~\cite{DHKLP} and~\cite{IKKPS} (see also~\cite{DH15}).
This, together with the results discussed above, gives the following corollary.

\begin{corollary}[Corollary~\ref{corollary: main}]
A pair of a smooth Fano threefold $X$ and a trivial divisor on it has a toric Landau--Ginzburg model.
Moreover, if $-K_X$ is very ample, then any Minkowski Laurent polynomial satisfying the period condition for $(X,0)$
is a toric Landau--Ginzburg model.
\end{corollary}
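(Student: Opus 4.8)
The plan is to verify, one by one, the three defining conditions of a toric Landau--Ginzburg model --- the period condition, the Calabi--Yau condition, and the toric condition --- splitting the $105$ families according to whether the anticanonical class is very ample. The proof is organized so that all of the hard analysis is absorbed into Theorem~\ref{theorem:main} and into the external degeneration results, leaving the corollary as an assembly of these ingredients.

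First I would treat the ``moreover'' clause. Fix a smooth Fano threefold $X$ with $-K_X$ very ample, and let $f_X$ be any Minkowski Laurent polynomial satisfying the period condition for $(X,0)$. The period condition holds by assumption. For the Calabi--Yau condition, observe that the Newton polytope $\Delta$ of $f_X$ is three-dimensional and reflexive, being the fan polytope of the Gorenstein toric variety $T$ attached to the Minkowski data; hence Theorem~\ref{theorem:main} applies and produces a log Calabi--Yau compactification of the family $f_X\colon(\CC^*)^3\to\CC$, in particular a fiberwise compactification to a family of K3 surfaces, which is exactly the Calabi--Yau condition. For the toric condition, one must exhibit a degeneration of $X$ to a toric variety whose fan polytope coincides with $\Delta$; this is precisely the content of the forthcoming works \cite{IKKPS} and \cite{DHKLP} (see also \cite{DH15}), whose statements I would invoke directly. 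With all three conditions in force, $f_X$ is a toric Landau--Ginzburg model, and since $f_X$ was arbitrary the ``moreover'' clause follows.

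Next I would deduce the existence statement for all $105$ families. For the $98$ families with very ample anticanonical class, the existence of a Minkowski Laurent polynomial satisfying the period condition for $(X,0)$ is guaranteed by \cite{fanosearch} and \cite{CCGK16}; feeding such a polynomial into the ``moreover'' clause yields a toric Landau--Ginzburg model. For the remaining seven families, whose anticanonical classes are not very ample, I would use the compactifications constructed ``by hand'' in this paper, where the families are compactified to families of singular quartics in $\PP^3$ or to hypersurfaces of bidegree $(2,3)$ in $\PP^1\times\PP^2$ admitting crepant resolutions; these supply the Calabi--Yau condition, while the period and toric conditions for these seven cases are verified directly. This exhausts all $105$ families and completes the existence assertion.

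The genuinely new input is Theorem~\ref{theorem:main}, so from the corollary's standpoint the main obstacle is not analytic but a matter of careful bookkeeping: one must check that the reflexive polytope fed into the toric-degeneration results of \cite{IKKPS} and \cite{DHKLP} is literally the Newton polytope $\Delta$ of the chosen $f_X$, so that the toric condition of \cite{Prz13} is met on the nose; and one must handle the seven non-very-ample families --- which fall outside the clean Minkowski/reflexive framework --- consistently, so that their ad hoc Calabi--Yau compactifications still come paired with polynomials meeting the period and toric conditions.
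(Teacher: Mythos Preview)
Your proposal is correct and follows essentially the same route as the paper, which simply states that the corollary summarizes \cite{Prz13}, Theorem~\ref{theorem:main}, Proposition~\ref{proposition:CY hyperelliptic}, and the forthcoming degeneration results \cite{IKKPS}, \cite{DHKLP}. One small bookkeeping correction: among the seven non--very-ample families, only five ($X_{2-1}$, $X_{2-2}$, $X_{2-3}$, $X_{9-1}$, $X_{10-1}$) are handled ``by hand'' in this paper via Proposition~\ref{proposition:CY hyperelliptic}, while the remaining two ($X_{1-1}$ and $X_{1-11}$) are imported from \cite{Prz13}.
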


One can get log Calabi--Yau compactifications for del Pezzo surfaces (of degrees greater then $2$) in a similar way.
In this case the base locus is just a set of points (possibly with multiplicities),
and the resolution of the base locus is given just by a number of blow ups of these points.
Landau--Ginzburg models for del Pezzo surfaces equipped with general divisors
are constructed in~\cite{AKO06}; see also~\cite{UY13} for another description for arbitrary divisors. Our compactification procedure gives a precise method
to write down a Laurent polynomial (and to describe singularities of its fibers)
for \emph{any} chosen divisor.

\medskip

The paper is organized as follows. In Section~\ref{section:preliminaries} we recall some preliminaries and
give the main notation and definitions. In particular, we define toric Landau--Ginzburg models
and give the definition of Minkowski Laurent polynomials.
In Section~\ref{section: del Pezzo} we consider the two-dimensional case. We present an explicit way to write
down a toric Landau--Ginzburg model for a del Pezzo surface with an arbitrary divisor on it.
In Section~\ref{section:main theorem} we study the case of Fano threefolds and trivial divisors on them.
We describe the structure of the base loci of the families
we are interested in. Using this we prove the existence of Calabi--Yau compactifications for Minkowski Laurent
polynomials (Theorem~\ref{theorem:main}).
Then we construct Calabi--Yau compactifications for 
the rest cases. We summarize our considerations in Corollary~\ref{corollary: main}, where we state that
smooth Fano threefolds 
have toric Landau--Ginzburg models.
We also give some remarks and questions on 
fibers over infinity of their compactifications.

\medskip

{\bf Notation and conventions.}
All varieties are defined over the field $\CC$ of complex numbers.
We use the standard notation for multidegrees: given an element $a=(a_1,\ldots,a_k)\in \ZZ^k$
and a multivariable $x=(x_1,\ldots,x_k)$, we denote $x_1^{a_1}\ldots x_k^{a_k}$ by $x^a$.
For a variety $X$, we denote $\Pic (X)\otimes \CC$ by $\Pic(X)_{\CC}$.
For a Laurent polynomial $f\in \CC[\ZZ^n]$ we denote its Newton polytope, i.\,e. a
convex hull in $\ZZ^n\otimes \RR$ of exponents of non-zero monomials of $f$, by $N(f)$.
Smooth del Pezzo surface of degree $d$ (excluding the case of quadric surface) is denoted by $S_d$.
We denote by $X_{k-m}$ the smooth Fano variety (considered as an element of a family of Fano varieties of this type) of Picard rank $k$ and number $m$ in the lists from~\cite{IP99}.
The missing variety of Picard rank $4$ and degree $26$ we denote by $X_{4-2}$ and shift subscripts for varieties
of the same Picard rank and greater degree.
We denote by $\PP(a_0,\ldots,a_n)$ the weighted projective space with weights $a_0,\ldots,a_n$.
The (weighted) projective space with coordinates $x_0,\ldots,x_n$ we denote by $\PP[x_0:\ldots:x_n]$.
The affine space with coordinates $x_0,\ldots,x_n$ we denote by $\Aff[x_1,\ldots,x_n]$.
The ring $\CC[x_1^{\pm 1},\ldots,x_n^{\pm 1}]$ is denoted by $\TT[x_1,\ldots, x_n]$.
A polytope with integral vertices we call integral.
We threat a pencil in birational sense: a pencil for us is a family birational to a family of fibers of a morphism to $\PP^1$.

\medskip

{\bf Acknowledgements.}
The author is grateful to A.\,Corti, conversations with whom drastically improved the paper, and to
V.\,Golyshev, A.\,Harder, N.\,Ilten, A.\,Kasprzyk, V.\,Lunts, D.\,Sakovics, C.\,Shramov, and a referee
for helpful comments.

\section{Preliminaries}
\label{section:preliminaries}
\subsection{Toric geometry}
Consider a toric variety $T$.
\emph{A fan} (or \emph{spanning}) polytope $F(T)$ is a convex hull of integral generators
of fan's rays for $T$.
Let $\Delta=F(T)\subset N_\RR=\ZZ^n\otimes \RR$. 
Let
$$
\nabla=\{x\ |\ \langle x,y\rangle \geq -1 \mbox{ for all } y\in \Delta \}\subset M_\RR=N^\vee\otimes \RR
$$
be the dual polytope.

For an integral polytope $\Delta$ we associate a (singular) toric Fano variety $T_{\Delta}$
defined by a fan whose cones are cones over faces of $\Delta$. We also associate a (not uniquely defined) toric variety $\widetilde{T}_\Delta$
with $F(\widetilde T_\Delta)=\Delta$ such that for any toric variety $T'$ with $F(T')=\Delta$ and for any
morphism $T'\to \widetilde T_\Delta$ one has $T'\simeq \widetilde T_\Delta$. In other words, $\widetilde T_\Delta$ is
given by ``maximal triangulation'' of $\Delta$.

In the rest of the paper we assume that $\nabla$ is integral, in other words, that $\Delta$ (or $\nabla$, or $T$, or $T_\nabla$, or
$\widetilde T_\nabla$) is \emph{reflexive}.
In particular this means that integral points of both $\Delta$ and $\nabla$ are either the origin or lie on the boundary.
We denote $T_\nabla$ by $T^\vee$ and $\widetilde T_\nabla$ by $\widetilde T^\vee$.


\begin{lemma}
\label{lemma: 3dim smoothness}
Let $T$ be a threefold reflexive toric variety. Then $\widetilde T^\vee$ is smooth.
\end{lemma}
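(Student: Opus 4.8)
The plan is to reduce the statement to a purely two-dimensional, lattice-combinatorial fact about reflexive polygons, exploiting the fact that $\widetilde T^\vee$ is, by construction, the toric variety associated to a maximal (i.e. fine, unimodular-if-possible) triangulation of $\nabla$. A toric variety is smooth precisely when every maximal cone of its fan is generated by a $\ZZ$-basis of the lattice $M\cong\ZZ^3$; since the fan of $\widetilde T^\vee$ is a triangulation of $\nabla$ with vertices among the lattice points of $\nabla$, smoothness amounts to showing that the ``maximal triangulation'' of $\nabla$ can be taken so that every maximal simplex $\mathrm{conv}(0,v_1,v_2,v_3)$, with $v_1,v_2,v_3$ lattice points spanning a facet triangle, has $\{v_1,v_2,v_3\}$ a lattice basis. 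By the cone-over-faces structure this is equivalent to: every facet $F$ of $\nabla$ admits a \emph{unimodular} triangulation using only its own boundary lattice points together with interior lattice points of $F$, where unimodular means each triangle has normalized area $1$.

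First I would record the key input coming from reflexivity in dimension three: if $\nabla$ is a three-dimensional reflexive polytope, each facet $F$ is a lattice polygon lying in an affine hyperplane at lattice distance $1$ from the origin, and the origin together with $F$ is the only contribution to the fan in that direction — so the local picture at the corresponding cone is exactly $\mathrm{Cone}(F)$, and smoothness of that chart is equivalent to $F$ being a \emph{unimodular lattice polygon after triangulation}. Now I invoke the classical fact that \emph{every} lattice polygon admits a unimodular (``primitive'') triangulation: subdivide into lattice triangles containing no further lattice points, and by Pick's theorem every such empty lattice triangle has area $1/2$, hence is unimodular. This is the crucial two-dimensional statement and it is elementary. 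Therefore each facet cone of $\nabla$ becomes smooth after a fine triangulation of the facet, and assembling these facet triangulations (they automatically agree along shared edges, since an edge of $\nabla$ is a lattice segment and its subdivision into primitive segments is canonical) yields a fine triangulation of $\partial\nabla$, hence a fine fan refinement $\widetilde T^\vee\to T^\vee$ that is everywhere smooth.

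The main obstacle — and the only place where dimension three is genuinely used — is precisely the step ``empty lattice triangle $\Rightarrow$ unimodular''; this is false in dimension $\geq 3$ (there exist empty tetrahedra of arbitrarily large volume, the Reeve simplices), which is why the lemma would fail for higher-dimensional reflexive polytopes and why it is stated only for threefolds. So the real content of the proof is: (i) observe that $\widetilde T^\vee$'s maximal cones are cones over primitively-triangulated facets of the reflexive polygon-faced polytope $\nabla$; (ii) apply Pick/empty-triangle to see each such cone is generated by a lattice basis; (iii) check the universal property in the statement of $\widetilde T_\nabla$ is consistent with (hence forces) this fine triangulation, so that ``the'' $\widetilde T^\vee$ is indeed this smooth one. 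I would also double-check the normalization that the hyperplane containing a facet of a reflexive polytope is at lattice distance exactly $1$ from the origin, since that is what makes $\{0\}\cup F$ produce a cone whose smoothness is controlled entirely by the triangulation of $F$ and not by any ``height'' defect.
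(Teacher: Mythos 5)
Your proposal is correct and follows essentially the same route as the paper: the maximal triangulation makes every maximal cone a cone over an empty lattice triangle lying in a facet hyperplane at lattice distance $1$ (by reflexivity), and Pick's theorem gives the triangle normalized area $\frac12$, hence the pyramid over it volume $\frac16$, so its vertices form a lattice basis and the chart is smooth. Your remark that this is exactly the step failing in higher dimensions (empty simplices of large volume) matches the paper's comment following the lemma.
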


\begin{proof}
Let $C$ be a two-dimensional cone of the fan of $\widetilde T^\vee$. It is a cone over an integral triangle $R$ without strictly internal integral points, such that $R$ lies in the affine plane $L=\{x|\ \langle x,y\rangle\geq -1\}$ for some $y\in N$.
This means that is some basis $e_1,e_2,e_3$ in $M$ one
gets $L=\{a_1e_1+a_2e_2+e_3\}$. Let $P$ be a pyramid over $R$ whose vertex is an origin. Then by Pick's formula
one has $\vol R=\frac{1}{2}$ and $\vol P=\frac{1}{6}$, which means that vertices of $R$ form basis in $M$, so $\widetilde T^\vee$ is smooth.
\end{proof}

Unfortunately, Lemma~\ref{lemma: 3dim smoothness} does not hold for higher dimensions in general, because there are $n$-dimensional
simplices whose only integral points are vertices, such that their volumes are greater then $\frac{1}{n!}$.

\begin{definition}[see~\cite{CCGK16}]
\label{definition: Minkowski}
An integral polygon is called \emph{of type $A_n$}, $n\geq 0$, if it is a triangle such that two its edges have integral length $1$ and
the rest one has integral length $n$.
(In other words, its integral points are $3$ vertices and $n-1$ points lying on the same edge.)
In particular, $A_0$ is a segment of integral length $1$.

An integral polygon $P$ is called \emph{Minkowski}, or \emph{of Minkowski type}, if it is a Minkowski sum of several polygons of type $A_n$,
that is
$$
P=\left\{p_1+\ldots +p_k |\, p_i\in P_i \right\}
$$
for some polygons $P_i$ of type $A_{k_i}$, and if the affine lattice generated by $P\cap \ZZ^2$ is a sum of affine lattices
generated by $P_i\cap \ZZ^2$. Such decomposition is called \emph{admissible lattice Minkowski decomposition} and
denoted by $P=P_1+\ldots +P_k$.

An integral three-dimensional polytope is called \emph{Minkowski} if it is reflexive and if all its facets
are Minkowski polygons.
\end{definition}

Finally summarize some facts related to toric varieties and their anticanonical sections. One can see, say,~\cite{Da78} for details.
It is more convenient to start from the toric variety $T^\vee$ for the following.

\begin{itemize}
\item[Fact $1$.]
Let the anticanonical class $-K_{T^\vee}$ be very ample (in particular, this holds in reflexive threefold case, see~\cite{JR06} and~\cite{CPS05}).
One can embed $T^\vee$ to a projective space in the following way.
Consider a set $A\subset M$ of integral points in a polytope $\Delta$ dual to $F(T^\vee)$. Consider a projective space
$\PP$ whose coordinates $x_i$ correspond to elements $a_i$ of $A$.
Associate a homogenous equation $\prod x_i^{\alpha_i}=\prod x_j^{\beta_j}$ with any homogenous relation $\sum \alpha_i a_i=\sum \beta_j a_j$, $\alpha_i, \beta_j\in \ZZ_+$.
The variety $T^\vee$ is cut out in $\PP$ by equations associated to all homogenous relations on $a_i$.

\item[Fact $2$.] The anticanonical linear system of $T^\vee$ is a restriction of $\cO_\PP(1)$. In particular,
it can be described as (a projectivisation of) a linear system of Laurent polynomials whose Newton
polytopes contain in $\Delta$.

\item[Fact $3$.]
Toric strata of $T^\vee$ of dimension $k$ correspond to $k$-dimensional faces of $\Delta$.
Denote by $R_{f}$ an anticanonical
section corresponding to a Laurent polynomial $f\in \CC[N]$ and by $F_Q$ a strata corresponding to a face $Q$ of $\Delta$.
Denote by $f|_Q$ a sum of those monomials of $f$ whose support lie in $Q$.
Denote by $\PP_Q$ a projective space whose coordinates correspond to $Q\cap N$.
(In particular, $Q$ is cut out in $\PP_Q$ by homogenous relations on integral points of $Q\cap N$.)
Then $R_{Q,f}=R_f|_{F_Q}=\{f|_Q=0\}\subset \PP_Q$.

\item[Fact $4$.]
In particular, $R_{f}$ does not pass through
a toric point corresponding to a vertex of $\Delta$ if and only if its coefficient
at this vertex is non-zero. The constant Laurent polynomial corresponds to
the boundary divisor of $T^\vee$.
\end{itemize}

\subsection{Laurent polynomials and toric Landau--Ginzburg models}
Different polynomials with the same Newton polytope $\Delta$ give the same toric variety $T_\Delta$. However a choice of particular
coefficients of a polynomial is important from Mirror Symmetry point of view.

We briefly recall a notion of toric Landau--Ginzburg model. More details see, for instance, in~\cite{Prz13}.

Let $X$ be a smooth Fano variety of dimension $n$ and Picard number $\rho$.
Let the number
$$
\langle \tau_{a_1} \gamma_1, \ldots,\tau_{a_k} \gamma_n\rangle_\beta, \quad a_i\in \ZZ_{\geq 0},\ \gamma_i \in H^*(X,\CC),\ \beta \in H_{2}(X,\ZZ),
$$
be a \emph{$k$-pointed genus $0$ Gromov--Witten invariant with descendants} for $X$, see~\cite[VI-2.1]{Ma99}.

Choose $\CC$-divisors $L$ and $D$ on $X$.
One can treat $L$ as a direction (one-dimensional subtorus) on $\TT=\Spec \CC[H_2(X,\ZZ)]$ and $D$ as a point $p_{D}$ on $\TT$.
Let $\mathbf 1$ be the fundamental class of $X$.
Denote by $R$ a set of classes of effective curves.
%
The series
$$
\widetilde{I}^{X,L,D}_{0}(t)
=1+\sum_{\beta \in R,\ a\in \ZZ_{\geq 0}} (\beta\cdot L)!\langle\tau_{a} \mathbf 1\rangle_{\beta}
\cdot e^{-\beta\cdot D}t^{\beta\cdot L}
$$
is called \emph{a constant term of regularized $I$-series} for $X$. It is a flat section
for the restriction of the second Dubrovin's connection 
on $\TT$ to a direction of $D$ passing through $p_{D}$.

Now we are interested in an \emph{anticanonical direction}, so $L=-K_X$.
We denote 
$\widetilde{I}_0^{X,-K_X,D}$ by $\widetilde{I}_0^{X,D}$.
The latter series is the main object we associate to the pair $(X,D)$.
Moreover, we often consider the case $D=0$; in this case we use notation 
$\widetilde I^X_0$ for 
$\widetilde I^{X,0}_0$. 

Now consider the other side of Mirror Symmetry.

Let~$\phi[f]$ be a constant term of a Laurent polynomial~$f$.
Define $I_f(t)=\sum \phi[f^j] t^j$.
The following theorem (which is a mathematical folklore, see~\cite[Proposition 2.3]{Prz08} 
for the proof)
justifies this definition.

\begin{theorem}
\label{theorem: Picard--Fuchs}
Let $f$ be a Laurent polynomial in $n$ variables.
Let $P$ be a Picard--Fuchs differential operator for a pencil of hypersurfaces in a torus
provided by $f$.
Then
one has~\mbox{$P[I_f(t)]=0$}.
\end{theorem}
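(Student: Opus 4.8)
The plan is to prove $P[I_f(t)] = 0$ by identifying $I_f(t)$ with the period of a holomorphic form on the fibers of the pencil defined by $f$, and then invoking the defining property of the Picard--Fuchs operator. First I would recall the standard period integral attached to $f$: on the torus $(\CC^*)^n$ with invariant form $\omega = \frac{dx_1}{x_1}\wedge\cdots\wedge\frac{dx_n}{x_n}$, consider the family of hypersurfaces $Z_t = \{1 - tf = 0\}$ (equivalently the fibers of $f$ after the change of variable $s = 1/t$), and form the integral $\pi(t) = \left(\frac{1}{2\pi i}\right)^n \int_{\Gamma} \frac{\omega}{1 - tf}$ over the cycle $\Gamma = \{|x_1| = \cdots = |x_n| = 1\}$. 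The key computational identity is that expanding the geometric series $\frac{1}{1-tf} = \sum_{j\ge 0} f^j t^j$ and integrating term by term, the residue $\left(\frac{1}{2\pi i}\right)^n \int_\Gamma f^j\,\omega$ picks out exactly the constant term $\phi[f^j]$ of the Laurent polynomial $f^j$. Hence $\pi(t) = \sum_j \phi[f^j]\, t^j = I_f(t)$ as formal power series (and as an analytic function near $t = 0$).

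Next I would argue that $\pi(t)$, being a period of the family $Z_t$, is annihilated by the Picard--Fuchs operator of that family. This is essentially the definition: the Picard--Fuchs operator $P$ is the generator of the annihilator, in the ring of differential operators in $t$, of the Gauss--Manin connection applied to the class of $\omega/(1-tf)$ (or the corresponding class in the middle cohomology of the fibers); any period of a flat family of cycles satisfies $P[\pi] = 0$. Since the pencil "provided by $f$" in the statement is precisely this pencil of hypersurfaces in the torus, we get $P[I_f(t)] = P[\pi(t)] = 0$. One should be slightly careful that the cycle $\Gamma$ does lie in a fiber and varies flatly — for $t$ small it can be taken as a torus cycle avoiding $Z_t$, so that $\pi$ is a genuine solution of the Picard--Fuchs equation holomorphic at the origin; the identity of power series then propagates the annihilation statement to $I_f(t)$ as a formal object.

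I would structure the writeup as: (1) set up the period integral and the cycle $\Gamma$; (2) prove the residue identity $\left(\frac{1}{2\pi i}\right)^n\int_\Gamma f^j \omega = \phi[f^j]$ by the elementary fact that $\oint x^k \frac{dx}{x} = 2\pi i$ if $k = 0$ and $0$ otherwise, applied monomial-by-monomial; (3) conclude $I_f(t) = \pi(t)$; (4) cite that periods are killed by Picard--Fuchs operators, with reference to the cited sources~\cite{Prz08}. The main obstacle, such as it is, is purely expository: making precise the passage between the formal power series $I_f(t)$ and the analytic period $\pi(t)$, and being careful about what "the" Picard--Fuchs operator means (it is only defined up to left multiplication, and one usually takes the minimal-order one; but any operator in the annihilator works for the stated conclusion). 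Since the result is acknowledged folklore, I would keep the argument brief and lean on the residue computation as the only real content.
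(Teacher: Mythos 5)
Your argument is correct and is essentially the same as the folklore proof the paper delegates to~\cite{Prz08}: identify $I_f(t)$ with the period $\left(\tfrac{1}{2\pi i}\right)^n\int_\Gamma \tfrac{\omega}{1-tf}$ via the residue computation on the torus cycle, then invoke that periods of the pencil are annihilated by its Picard--Fuchs operator. The only imprecision is the remark that $\Gamma$ ``lies in a fiber'' --- it lies in the complement of $Z_t$ in the torus, and one passes to an honest period of the fibers via the Leray tube/residue map --- but this is a standard and easily repaired point of exposition, not a gap.
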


\begin{definition}[{see~\cite[\S6]{Prz13}}]
\label{definition: toric LG}
\emph{A toric Landau--Ginzburg model} for a pair of a smooth Fano variety $X$ of dimension $n$ and divisor $D$ on it is a Laurent polynomial $\mbox{$f\in \TT[x_1, \ldots, x_n]$}$ which satisfies the following.
\begin{description}
  \item[Period condition] One has $I_f=\widetilde{I}^{X,D}_0$.
  \item[Calabi--Yau condition] There exists a relative compactification of a family
$$f\colon (\CC^*)^n\to \CC$$
whose total space is a (non-compact) smooth Calabi--Yau
variety $Y$. Such compactification is called \emph{a Calabi--Yau compactification}.
  \item[Toric condition] There is a degeneration
   $X\rightsquigarrow T_X$ to a toric variety~$T_X$ such that $F(T_X)=N(f)$.
\end{description}

Laurent polynomial satisfying the period condition is called a \emph{weak Landau--Ginzburg model}.
\end{definition}

\begin{definition}
A compactification of the family $f\colon (\CC^*)^n\to \CC$ to a family $f\colon Z\to \PP^1$, where $Z$ is smooth and
$-K_Z=f^{-1}(\infty)$, is called a \emph{log Calabi--Yau compactification} (cf. a notion of tame compactified Landau--Ginzburg model in~\cite{KKP14}).
\end{definition}

We consider Mirror Symmetry as a correspondence between Fano varieties and Laurent polynomials.
That is, a strong version of Mirror Symmetry of variations of Hodge structures conjecture states the following.
\begin{conjecture}[{see~\cite[Conjecture 38]{Prz13}}]
\label{conjecture:MS}
Any pair of a smooth Fano variety and a divisor on it has a toric Landau--Ginzburg model. 
\end{conjecture}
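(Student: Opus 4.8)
Since the final statement is a \emph{conjecture} rather than a theorem, what follows is necessarily a program rather than a complete argument; I outline the strategy that the constructions of this paper suggest for a general pair $(X,D)$. The plan is to treat the three requirements of Definition~\ref{definition: toric LG} separately, reducing each to the toric situation, where Givental's mirror theorem for toric varieties and complete intersections therein already supplies the needed input. The organizing principle is deformation invariance of Gromov--Witten invariants: if $X$ degenerates to a Gorenstein toric variety $T_X$ with reflexive fan polytope $\Delta=F(T_X)$, then the enumerative geometry of $X$ should be readable off $T_X$, and a Laurent polynomial $f_X$ with $N(f_X)=\Delta$, built from the combinatorics of $\Delta$, should serve as the mirror.

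First I would establish the \textbf{toric condition}. This requires producing, for every smooth Fano $X$, a degeneration to a Gorenstein toric variety whose fan polytope is reflexive, and then selecting $f_X$ of Minkowski type (Definition~\ref{definition: Minkowski}) so that its coefficients are rigidly pinned down by admissible lattice Minkowski decompositions of the facets of $\Delta$. In the threefold case this is exactly the content of the forthcoming work cited in the introduction; in general, the mere existence of such ``good'' toric degenerations is open and is the first serious obstacle. Granting it, the candidate mirror lives inside the anticanonical system of the dual toric variety $T^\vee$ associated to the reflexive dual polytope $\nabla$.

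Next comes the \textbf{period condition}, $I_{f_X}=\widetilde I_0^{X,D}$. Here the strategy is to match $I_{f_X}(t)=\sum \phi[f_X^j]t^j$ against the regularized constant term of the $I$-series term by term, deducing the equality from Givental's toric mirror theorem applied to $T^\vee$ (or its crepant resolution) together with deformation invariance along $X\rightsquigarrow T_X$. The divisor $D$ enters only through the point $p_D$ on $\TT$ and the twist $e^{-\beta\cdot D}$, so once the case $D=0$ is settled, varying $D$ amounts to a shift of the flat section for the second Dubrovin connection, which one hopes to realize by varying the coefficients of $f_X$ within the same Newton polytope. Making this uniform over all $(X,D)$ is the deepest analytic input, and I expect it to be provable only case by case outside the ranges already covered by Givental.

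The \textbf{Calabi--Yau condition} is where the methods of the present paper generalize most directly, and also where the decisive obstruction appears. One embeds the pencil of fibers of $f_X$ into the anticanonical linear system of $T^\vee$ using Facts $1$--$4$, identifies the constant Laurent polynomial with the boundary divisor, resolves the base locus, and passes to a crepant resolution $\widetilde T^\vee\to T^\vee$, cutting out the boundary to obtain a (log) Calabi--Yau total space. In dimension three this succeeds precisely because Lemma~\ref{lemma: 3dim smoothness} guarantees that $\widetilde T^\vee$ is smooth. The hard part in dimension $n\ge 4$ is exactly the failure of that lemma, flagged in the paragraph following it: there exist $n$-simplices whose only lattice points are their vertices yet whose volume exceeds $\frac{1}{n!}$, so $T^\vee$ need not admit a crepant resolution and terminal singularities obstruct a smooth Calabi--Yau total space. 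The natural remedy is to replace crepant resolution by a Minimal Model Program argument, allowing mild (Gorenstein canonical) singularities on $Y$ and correspondingly weakening the Calabi--Yau condition; proving that such a $Y$ always exists, simultaneously with the toric degeneration and the period matching, is the principal obstacle to the full conjecture.
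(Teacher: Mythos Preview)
The statement is a \emph{conjecture}; the paper does not prove it and offers no proof to compare against. What the paper does prove are the special cases summarized in Corollary~\ref{corollary: main} (smooth Fano threefolds with trivial divisor) and Proposition~\ref{proposition: toric LG for del Pezzo} (del Pezzo surfaces with arbitrary divisor). Your opening disclaimer is therefore exactly right, and the program you sketch is essentially the scaffolding the paper uses for those cases: degenerate to a Gorenstein toric variety, pick a Minkowski-type Laurent polynomial on the dual side, verify periods, compactify in $T^\vee$, pass to $\widetilde T^\vee$, and resolve the base locus. You also correctly isolate the place where the argument breaks above dimension three, namely the failure of Lemma~\ref{lemma: 3dim smoothness}.

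One point deserves correction. Your plan for the period condition invokes ``deformation invariance along $X\rightsquigarrow T_X$'' to transport Givental's identity from the toric variety to $X$. This is not how the period condition is actually established in the cases the paper (and its references) handle, and it is not clear it can be made to work: Gromov--Witten invariants are deformation invariant only along smooth families, whereas $T_X$ is singular, and quantum cohomology of the singular toric limit is not the object Givental's theorem computes. In practice the period condition is checked either by expressing $X$ as a complete intersection in a smooth toric ambient and applying quantum Lefschetz (this is what \cite{Gi97}, \cite{CCGK16}, and \cite{fanosearch} do), or by direct comparison of $I_f$ with an independently computed $\widetilde I_0^{X,D}$. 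So in your program the period step should be rephrased accordingly; the rest of the outline is sound as a strategy.
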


In addition to the threefold case discussed in the introduction, the existence of Laurent polynomials satisfying
period condition for smooth toric varieties is shown in~\cite{Gi97},
for complete intersections in Grassmannians is shown in~\cite{PSh14},~\cite{PSh14b},~\cite{PSh15b},
for some complete intersections in some toric varieties is shown in~\cite{CKP14} and~\cite{DH15}.
In~\cite{DH15} the toric condition for some complete intersections in toric varieties and partial
flag varieties is also checked.

In a lot of cases 
polynomials satisfying the period and toric conditions satisfy
the Calabi--Yau condition as well. However it is not easy to check this condition:
there are no general enough approaches as for the rest two conditions are; usually one needs to
check the Calabi--Yau condition ``by hand''. The natural idea is to compactify the fibers
of the map $f\colon (\CC^*)^n\to \CC$ using the embedding $(\CC^*)^n\hookrightarrow T^\vee$,
where a toric variety $T=T_\Delta$ corresponds to $\Delta=N(f)$.
Indeed, the fibers compactify to anticanonical sections in $T^\vee$, and, since, have trivial canonical classes. However, first,
$T^\vee$ is usually singular, and, even if we resolve it (if it has a crepant resolution!),
we can just conclude that its general anticanonical section is a smooth Calabi--Yau variety,
but it is hard to say anything about the particular sections we need.
Second, the family of anticanonical sections we are interested in has a base locus which
we need to blow up to construct a Calabi--Yau compactification; and this blow up can be non-crepant.

Coefficients of the polynomials that correspond to trivial divisors tend to have very symmetric
coefficients, at least for the simplest toric degenerations. In this case the base loci
are more simple and enable us to construct Calabi--Yau compactifications.

Consider a Laurent polynomial $f$. Recall that we can construct a Fano toric variety $T=T_\Delta$, where
$\Delta=N(F)$, a dual toric Fano variety $T^\vee=T_\nabla$,  where $\nabla$ is a dual
polytope for $\Delta$, and maximally triangulated toric variety $\widetilde T^\vee=\widetilde T_\nabla$.
We make two assumptions for $f$. First, we assume that $\nabla$ is integral, in other words, that $\Delta$ is {reflexive}.
In particular this means that integral points of both $\Delta$ and $\nabla$ are either the origin or lie on the boundary.
The other assumption is related to the special ``symmetric'' choice of coefficients of $f$.

\begin{definition}[see~\cite{CCGK16}]
Let $P\in \ZZ^2\otimes \RR$ be an integral polygon of type $A_n$. Let $v_0,\ldots,v_n$ be consecutive integral points
on the edge of $P$ of integral length $n$ and let $u$ be the rest integral point of $P$.
Let $x=(x_1,x_2)$ be a multivariable that corresponds to an integral lattice $\ZZ^2\subset \RR^2$.
Put
$$
f_P=x^u+\sum \binom{n}{k} x^{v_k}.
$$
(In particular one has $f_P=x^u+x^{v_0}$ for $n=0$.)

Let $Q=Q_1+\ldots+Q_s$ be an admissible lattice Minkowski decomposition of an integral polygon $Q\subset \RR^2$.
Put
$$
f_{Q_1,\ldots,Q_s}=f_{Q_1}\cdot\ldots\cdot f_{Q_s}.
$$

A Laurent polynomial $f\subset \TT[x_1,x_2,x_3]$ is called \emph{Minkowski} if $N(f)$ is Minkowski and
for any facet $Q\subset N(f)$, 
as for an integral polygon, there exist an admissible lattice Minkowski decomposition $Q=Q_1+\ldots+Q_s$ such that $f|_Q=f_{Q_1,\ldots,Q_s}$.
\end{definition}

\begin{remark}[cf.~\cite{CCGK16}]
\label{remark: cDV threefolds}
There are $105$ families of smooth Fano threefolds (see~\cite{Is77},~\cite{Is78}, and~\cite{MM82}).
Among them $98$ families
have Minkowski weak Landau--Ginzburg models,
and seven varieties, that are $X_{1-1}$, $X_{1-11}$, $X_{2-1}$, $X_{2-2}$, $X_{2-3}$, $X_{9-1}$, and $X_{10-1}$,
do not have reflexive toric degenerations.
\end{remark}

\section{Del Pezzo surfaces}
\label{section: del Pezzo}

We start the section by recalling well known facts about del Pezzo surfaces. We refer, say, to~\cite{Do12} as to one of huge amount
of references on del Pezzo surfaces.

The initial definition of del Pezzo surface is the following one given by P.\,del Pezzo himself.

\begin{definition}[\cite{dP87}]
\emph{A del Pezzo surface} is a non-degenerate irreducible linear normal (that is it is not a projection of degree $d$ surface in $\PP^{d+1}$) surface in $\PP^d$ of degree $d$ which is not a cone.
\end{definition}

In modern words this means that a del Pezzo surface is an (anticanonically embedded) surface with ample anticanonical class
and canonical (the same as du Val, simple surface, Kleinian, or rational double point) singularities. (Classes of canonical and
Gorenstein singularities for surfaces coincide.)
So we use the following more general definition.

\begin{definition}
\emph{A del Pezzo surface} is a complete surface with ample anticanonical class and canonical singularities.
\emph{A weak del Pezzo surface} is a complete surface with nef and big anticanonical class and canonical singularities.
\end{definition}

\begin{remark}
Weak del Pezzo surfaces are (partial) minimal resolutions of singularities of del Pezzo surfaces. Exceptional
divisors of the resolutions are $(-2)$-curves.
\end{remark}

\emph{A degree} of del Pezzo surface $S$ is the number $d=(-K_S)^2$. One have $1\leq d \leq 9$.
If $d>2$, then the anticanonical class of $S$ is very ample and it gives the embedding $S\hookrightarrow \PP^d$, so both definitions
coincide.
From now on we assume that $d>2$.

Obviously, projecting a degree $d$ surface in $\PP^d$ from a point on it one gets degree $d-1$ surface in $\PP^{d-1}$.
This projection is nothing but blow up of the center of the projection and blow down all lines passing through the
point. (By adjunction formula these lines are $(-2)$-curves.) If we choose general (say, not lying on lines) centers of projections we get a classical description
of a smooth del Pezzo surface of degree $d$ as a quadric surface (with $d=8$) or a blow up of $\PP^2$ in $9-d$ points.
They degenerate to singular surfaces which are projections from non-general points (including infinitely close ones).
Moreover, all del Pezzo surfaces of given degree lie in the same irreducible deformation space except for degree $8$ when there are two
components (one for a quadric surface and one for a blow up $\FF_1$ of $\PP^2$). General elements of the families are smooth,
and all singular del Pezzo surfaces are degenerations of smooth ones in these families.
This description enables us to construct toric degenerations of del Pezzo surfaces.
That is, $\PP^2$ is toric itself. 
Projecting from toric points one gets a (possibly singular) toric del Pezzo surfaces.

\begin{remark}
Del Pezzo surfaces of degree $1$ or $2$ also have toric degenerations.
Indeed, these surfaces can be described as hypersurfaces in weighted projective spaces, that is ones
of degree $4$ in $\PP(1,1,1,2)$ and of degree $6$ in $\PP(1,1,2,3)$ correspondingly,
so they can be degenerated to binomial hypersurfaces.
However their singularities are worse then canonical.
\end{remark}

Let $T_S$ be a Gorenstein toric degeneration of a del Pezzo surface $S$ of degree $d$. Let $\Delta=F(T_S)\subset N_\RR=\ZZ^2\otimes \RR$ be a fan polygon
of $T_S$. Let $f$ be a Laurent polynomial such that $N(f)=\Delta$.

Our goal now is to describe in details a way to construct a Calabi--Yau compactification for $f$.
More precise, we construct a commutative diagram
\[\xymatrix{
(\CC^*)^2\ar@{^{(}->}[r] \ar[rd]^f& Y\ar[d] \ar@{^{(}->}[r] & Z\ar[d]\\
& \Aff^1 \ar@{^{(}->}[r] & \PP^1,\\
}
\]
where $Y$ and $Z$ are smooth, fibers of maps $Y\to \Aff^1$ and $Z\to \PP^1$ are compact, and $-K_Z=f^{-1}(\infty)$;
we denote all ``vertical'' maps in the diagram by $f$ for simplicity.

The strategy is the following. First we consider a natural compactification of the pencil $\{f=\lambda\}$ to an elliptic pencil
in a toric del Pezzo surface $T^\vee$. Then we resolve singularities of $T^\vee$ and get a pencil in a smooth toric weak del Pezzo surface
$\widetilde{T}^\vee$. Finally we resolve a base locus of the pencil to get $Z$. We get $Y$ cutting out strict transform of the boundary divisor of $\widetilde{T}^\vee$.

The polygon $\Delta$ 
has integral vertices in
$N_\RR$ and it has the origin as a unique strictly internal integral point.
A dual polygon $\nabla=\Delta^\vee \subset M=N^\vee$ has integral vertices and a unique strictly internal integral point as well. 
Geometrically this means that singularities of $T$ and $T^\vee$ are canonical. 

\begin{remark}
The normalized volume of $\nabla$ is given by
$$\mathrm{vol}\, \nabla= |\mbox{integral points in $\nabla$}|-1=(-K_{S})^2=d.$$
It is easy to see 
that $$|\mbox{integral points in $\Delta$}|+|\mbox{integral points in $\nabla$}|=12.$$
In particular, $\mathrm{vol}\, \Delta=12-d$.
\end{remark}

\begin{compactificationconstruction}
\label{compactification construction}
By Fact $2$, the anticanonical linear system on $T^\vee$ can be described as a projectivisation of a linear space of Laurent polynomials whose
Newton polygons are contained in $\nabla^\vee=\Delta$. Thus the natural way to compactify the family is to do it using
embedding $(\CC^*)^2\hookrightarrow T^\vee$. Fibers of the family are anticanonical divisors in this (possibly singular)
toric variety. Two anticanonical sections intersect by $(-K_{T^\vee})^2= \mathrm{vol}\, \Delta=12-d$ points (counted with multiplicities),
so the compactification of the pencil in $T^\vee$ has $12-d$ base points (possibly with multiplicities).
The pencil $\{\lambda_0 f=\lambda_1\}$, $(\lambda_0:\lambda_1)\in \PP$
is generated by its general member and a divisor corresponding to a constant Laurent polynomial, i.\,e. to the boundary
divisor of $T^\vee$. Let us mention that the torus invariant points of $T^\vee$ do not lie in the base locus of the family
by Fact $4$.

Let $\widetilde{T}^\vee\to T^\vee$ be a minimal resolution of singularities of $T^\vee$.
Pull back the pencil under consideration. We get an elliptic pencil with $12-d$ base points (with multiplicities),
which are smooth points of 
the boundary divisor $D$ 
of the toric surface $\widetilde{T}^\vee$; this divisor is a wheel of $d$ smooth rational curves.
Blow up these base points and get an elliptic surface $Z$.
Let $E_1,\ldots,E_{12-d}$ be the exceptional curves of the blow up $\pi\colon Z\to \widetilde{T}^\vee$; in particular, $Z$ is not toric. Denote strict transform
of $D$ by $D$ for simplicity.
Then one has
$$
-K_Z=\pi^*(-K_{\widetilde{T}^\vee})-\sum E_i=D+\sum E_i-\sum E_i=D.
$$
Thus the anticanonical class $-K_Z$ contains $D$ and consists of fibers of $Z$.
This, in particular, means that an open variety $Y=Z\setminus D$ is a Calabi--Yau compactification of the pencil provided by $f$.
This variety has $e>0$ sections, where $e$ is a number of base points of the pencil in $\widetilde{T}^\vee$ counted \emph{without} multiplicities.

Summarizing, we obtain an elliptic surface $f\colon Z\to \PP^1$ with smooth total space $Z$ and a wheel $D$ of $d$ smooth rational curves over $\infty$.
\end{compactificationconstruction}

\begin{remark}
\label{remark: Hodge numbers for Z}
Let the polynomial $f$ be general among ones with the same Newton polygon. Then singular fibers of $Z\to \PP^1$ are either curves
with a single node or a wheel of $d$ rational curves over $\infty$. By Noether formula one has
$$
12\chi (\cO_Z)=(-K_Z)^2+e(Z)=e(Z),
$$
where $e(Z)$ is a topological Euler characteristic. Thus singular fibers for $Z\to \PP^1$
are $d$ curves with one node and a wheel of $d$ curves over $\infty$. This description is given in~\cite{AKO06}.
\end{remark}



\begin{remark}
One can compactify all toric Landau--Ginzburg models for all del Pezzo surfaces of degree at least three simultaneously.
That is, all reflexive polygons are contained in the biggest polygon $B$, that has vertices $(2,-1)$, $(-1,2)$, $(-1,-1)$. Thus fibers of all toric Landau--Ginzburg models can be simultaneously compactified to (possibly
singular) anticanonical curves on $T_{B^\vee}=\PP^2$. Blow up the base locus to construct a base points free family. However in this
case a general member of the family can pass through toric points as it can happen that $N(f)\varsubsetneq B$.
This means that some of exceptional divisors of the minimal resolution are 
extra curves in a wheel over infinity.

In other words, consider a triangle of lines 
on $\PP^2$.
A general member of the pencil given by $f$ is an elliptic curve on $\PP^2$.
The total space of the log Calabi--Yau compactification 
is a blow up of nine intersection points (counted with multiplicities) of the elliptic curve and the triangle of lines.
Exceptional divisors for points lying over vertices of the triangle are components of the wheel over infinity for the log
Calabi--Yau compactification; the others are either sections of the pencil or components of fibers over finite points.
\end{remark}

Now describe toric Landau--Ginzburg models for del Pezzo surfaces and toric weak del Pezzo surfaces.
That is, for a del Pezzo surface $S$, its
Gorenstein toric degeneration $T$ with a fan polygon $\Delta$,
its crepant resolution $\widetilde{T}$ with the same fan polygon, and a divisor $D\in \Pic(S)_\CC\cong \Pic(\widetilde T)_\CC$,
we construct two Laurent polynomials $f_{S,D}$ and $f_{\widetilde T,D}$, that are toric Landau--Ginzburg models
for $S$ and $\widetilde T$ correspondingly, by induction. For this use, in particular, Givental's construction of Landau--Ginzburg
models for smooth toric varieties, see~\cite{Gi97}.

Let $S\cong \PP^1\times \PP^1$ be a quadric surface, and let $D_S$ be an $(a,b)$-divisor on it.
Let $T_1=S$,
and let $T_2$ be a quadratic cone; $T_1$ and $T_2$ are the only Gorenstein toric degenerations of $S$.
The crepant resolution of $T_2$ is a Hirzebruch surface $\FF_2$, so let $D_{\FF_2}=\alpha s+\beta f$,
where $s$ is a section of $\FF_2$, so that $s^2=-2$, and let $f$ be a fiber of the map $\FF_2\to \PP^1$.
Define
$$
f_{S,D_S}=f_{\widetilde T_1,D_S}=x+\frac{e^{-a}}{x}+y+\frac{e^{-b}}{y}
$$
for the first toric degeneration and
$$
f_{S,D_S}=y+e^{-a}\frac{1}{xy}+\left(e^{-a}+e^{-b}\right)\frac{1}{y}+e^{-b}\frac{x}{y},\ \ f_{\widetilde T_2,D_{\FF_2}}=y+\frac{e^{-\beta}}{xy}
+\frac{e^{-\alpha}}{y}+\frac{x}{y}
$$
for the second one.

Now assume that $S$ is a blow up of $\PP^2$.
First let $S=T=\widetilde T=\PP^2$, let $l$ be a class of a line on $S$, and let $D=a_0l$.
Then up to a toric change of variables one has
$$
f_{\PP^2,D}=x+y+\frac{e^{-a_0}}{xy}.
$$
Now let $S'$ be a blow up of $\PP^2$ in $k$ points with exceptional
divisors $e_1, \ldots, e_k$, let $S$ be a blow up of $S'$ in a point,
and let $e_{k+1}$ be an exceptional divisor for the blow up.
We identify divisors on $S'$ and their strict transforms on $S$,
so $\Pic(S')=\Pic (\widetilde T')=\ZZ l+\ZZ{e_1}+\ldots+\ZZ{e_k}$ and
$\Pic(S)=\ZZ l+\ZZ{e_1}+\ldots+\ZZ{e_k}+\ZZ{e_{k+1}}$.
Let $D'=a_0l+a_1e_1+\ldots+a_ke_k\in \Pic(S')_\CC$
and $D=D'+a_{k+1}e_{k+1}\in \Pic(S)_\CC$.
First describe the polynomial $f_{\widetilde T,D}$.
Combinatorially $\Delta=F(\widetilde T)$
is obtained from a polygon $\Delta'=F(\widetilde T')$ by adding
one integral point $K$ that corresponds to the exceptional
divisor $e_{k+1}$, and taking a convex hull. Let
$L$, $R$ be boundary points of $\Delta$ neighbor to $K$, left and right with respect to the clockwise order.
Let $c_L$ and $c_R$ be coefficients in $f_{\widetilde T',D'}$
at monomials corresponding to $L$ and $R$. Let $M\in \TT[x,y]$ be a monomial corresponding to $K$.
Then from Givental's description of Landau--Ginzburg models for toric varieties (see~\cite{Gi97}) one gets
$$
f_{\widetilde T,D}=f_{\widetilde S',D'}+c_Lc_Re^{-a_{k+1}}M.
$$

The polynomial $f_{S,D}$ differs from $f_{\widetilde T,D}$ by coefficients at non-vertex boundary points.
For any boundary point $K\subset \Delta$ define \emph{marking} $m_K$ as a coefficient of $f_{\widetilde T,D}$ at $K$.
Consider a facet of $\Delta$ and let $K_0,\ldots, K_r$ be integral points in clockwise order of this facet.
Then coefficient of $f_{S,D}$ at $K_i$ is a coefficient at $s^i$ in the polynomial
$$
m_{K_0}\left(1+\frac{m_{K_1}}{m_{K_0}}s\right)\cdot\ldots\cdot \left(1+\frac{m_{K_r}}{m_{K_{r-1}}}s\right).
$$

\begin{remark}
One has $\Pic(S)\cong \Pic(\widetilde T)$.
That is, if $S$ is not a quadric, then both $S$ and $\widetilde T$
are obtained by a sequence of blow ups in points (the only difference is
that the points for $\widetilde T$ can lie on exceptional divisors of previous blow ups).
Thus in both cases Picard groups are generated by a class of a line on $\PP^2$ and exceptional divisors
$e_1,\ldots, e_k$. However an image of $e_i$ under the map of Picard groups
given by the degeneration of $S$ to $\widetilde T$ can be not equal to $e_i$ itself
but to some linear combination of the exceptional divisors.
In other words these bases do not agree with the degeneration map.
\end{remark}

\begin{remark}
The spaces parameterizing toric Landau--Ginzburg models for $S$ and for $\widetilde T$ are the same ---
they are the spaces of Laurent polynomials with Newton polygon $\Delta$ modulo toric rescaling.
Thus any Laurent polynomial correspond to different elements of $\Pic(S)_\CC\cong \Pic(\widetilde T)_\CC$.
This gives a map $\Pic(S)_{\CC}\to \Pic(\widetilde T)_{\CC}$. However this
map is transcendental because of exponential nature of the parametrization.
\end{remark}

\begin{proposition}
\label{proposition: toric LG for del Pezzo}
The Laurent polynomial $f_{S,D}$ is a toric Landau--Ginzburg model for $(S,D)$.
\end{proposition}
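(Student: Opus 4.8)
The plan is to verify the three defining properties of a toric Landau--Ginzburg model (Definition~\ref{definition: toric LG}) for $f_{S,D}$, the first two being essentially automatic and the period condition carrying the real weight of the argument; it is natural to run the induction on the number of blow-ups simultaneously for $f_{S,D}$ and $f_{\widetilde T,D}$, proving that $f_{\widetilde T,D}$ is a toric Landau--Ginzburg model for $(\widetilde T,D)$ along the way. The toric condition is built into the construction: $N(f_{S,D})=\Delta=F(T)$ and $T$ is by hypothesis a Gorenstein toric degeneration of $S$, while $\widetilde T$ is toric with $F(\widetilde T)=\Delta=N(f_{\widetilde T,D})$, so it degenerates to itself. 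The Calabi--Yau condition is exactly Compactification Construction~\ref{compactification construction}: $\Delta$ is reflexive because the Gorenstein degeneration $T$ has canonical singularities, i.e.\ $\nabla$ is integral, and the coefficients of $f_{S,D}$ at the vertices of $\Delta$ are non-zero --- this is clear in the base cases and is preserved along the recipe, the new vertex coefficient being the non-zero product $c_Lc_Re^{-a_{k+1}}$ and the non-vertex coefficients being products of non-zero markings --- so that construction produces a (log) Calabi--Yau compactification of $f_{S,D}\colon(\CC^*)^2\to\CC$; the same applies to $f_{\widetilde T,D}$.

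For the period condition I would argue by induction on $\rho(S)$. The base of the induction is $\PP^2$ with $f_{\PP^2,a_0l}=x+y+e^{-a_0}/(xy)$ and $\PP^1\times\PP^1$ with $f=x+e^{-a}/x+y+e^{-b}/y$; for these, $I_f=\widetilde I_0^{X,D}$ is Givental's mirror theorem for smooth toric varieties~\cite{Gi97}. The one genuinely new base computation is the quadratic-cone degeneration of $\PP^1\times\PP^1$ together with its resolution $\FF_2$: here one checks by hand --- either by computing the constant terms $\phi[f^j]$ directly, or by exhibiting an explicit birational change of variables in the torus carrying the stated $f_{S,D_S}$ and $f_{\widetilde T_2,D_{\FF_2}}$ to polynomials covered by Givental's construction --- that the required $I$-series is obtained.

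For the inductive step there are two points. First, for the smooth toric surface $\widetilde T$ obtained from $\widetilde T'$ by a toric blow-up at a torus-fixed point, the recipe $f_{\widetilde T,D}=f_{\widetilde T',D'}+c_Lc_Re^{-a_{k+1}}M$ is precisely Givental's rule for the effect on the mirror of adding the ray through $K$ to the fan (the new monomial has exponent the new ray and coefficient the product of the coefficients at the two adjacent rays times $e^{-a_{k+1}}$, the coordinate of the new exceptional class); hence $I_{f_{\widetilde T,D}}=\widetilde I_0^{\widetilde T,D}$ follows from~\cite{Gi97}, which applies since $\widetilde T$ is a smooth semi-Fano (nef anticanonical) surface. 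Second, to pass from $\widetilde T$ to the del Pezzo surface $S$: since $\widetilde T$ is a smooth weak del Pezzo surface of degree $d>2$ it lies in the deformation family of del Pezzo surfaces of degree $d$ --- irreducible for $d\ne 8$, and in the $\PP^1\times\PP^1$- or $\FF_1$-component as appropriate for $d=8$ --- whose general member is isomorphic to $S$; as genus-$0$ Gromov--Witten invariants are deformation-invariant, $\widetilde I_0$ is transported along this deformation, which identifies $\Pic(\widetilde T)_\CC\cong\Pic(S)_\CC$ and matches, for each $D$, the Landau--Ginzburg model $f_{\widetilde T,D}$ with a Landau--Ginzburg model of $S$ for some divisor class. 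It then remains to check that in the coordinates $l,e_1,\dots,e_k$ this identification is exactly $D\leftrightarrow f_{S,D}$, i.e.\ that the non-vertex boundary coefficients are the stated binomial ones (equivalently, that each facet restriction $f|_Q$ is the completely factored polynomial $f_{Q_1,\ldots,Q_s}$); this is a computation with the explicit deformation and Givental's formulas, which can be cross-checked through Theorem~\ref{theorem: Picard--Fuchs}: both $I$-series solve the same Picard--Fuchs operator and both have constant term $\phi[f^0]=1$, so they coincide.

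I expect the main obstacle to be precisely this last identification of divisor classes. As the remarks above emphasize, neither the degeneration $S\rightsquigarrow\widetilde T$ nor the assignment ``Laurent polynomial $\mapsto$ divisor'' respects the natural generators $l,e_1,\dots,e_k$, and the latter assignment is transcendental because of the exponential parametrization; so the heart of the proof is to pin down this change of coordinates and confirm that it is realized by the concrete binomial recipe for the non-vertex coefficients, thereby certifying that $I_{f_{S,D}}$ is the regularized $I$-series of $S$ for the class $D$ as labelled in the construction. Two smaller checks accompany it: the quadric-cone base case must be carried out explicitly, and one must record that the recursion keeps all boundary coefficients non-zero, as needed for Compactification Construction~\ref{compactification construction}.
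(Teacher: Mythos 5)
Your handling of the toric and Calabi--Yau conditions coincides with the paper's (the latter is exactly Compactification construction~\ref{compactification construction}, together with your correct observation that the recipe keeps the vertex coefficients non-zero), but your argument for the period condition has a genuine gap. The pivotal step --- that $I_{f_{\widetilde T,D}}=\widetilde I_0^{\widetilde T,D}$ ``follows from~\cite{Gi97}, which applies since $\widetilde T$ is a smooth semi-Fano surface'' --- is false as stated: Givental's theorem identifies the constant-term series of the naive Hori--Vafa potential with the regularized $I$-series only in the Fano case, whereas for nef-but-not-ample anticanonical toric surfaces (which is what $\widetilde T$ is whenever the degeneration $T$ is singular, i.e.\ in every case not already covered by ``$S$ is toric and $T=S$'') there is a non-trivial mirror map. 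Concretely, take $\widetilde T=\FF_2$ and $D=0$: the polynomial $f_{\widetilde T_2,0}=y+\tfrac{1}{xy}+\tfrac{1}{y}+\tfrac{x}{y}$ has $\phi[f^2]=2$, while your own deformation-invariance step gives $\widetilde I_0^{\FF_2,0}=\widetilde I_0^{\PP^1\times\PP^1,0}=1+4t^2+\cdots$, so the two series already disagree at order two. Hence the chain ``Givental on $\widetilde T$, then deform to $S$, then relabel divisors'' collapses at its first link; the marking/binomial correction that turns $f_{\widetilde T,D}$ into $f_{S,D}$ is precisely the mirror-map correction forced by this discrepancy, so the ``identification of divisor classes'' you postpone to the end is not residual bookkeeping but the entire content of the period condition. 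The Picard--Fuchs shortcut does not repair this: Theorem~\ref{theorem: Picard--Fuchs} only says that $I_{f_{S,D}}$ is annihilated by the Picard--Fuchs operator of the pencil $\{f_{S,D}=\lambda\}$; to conclude anything you would already need to know that $\widetilde I_0^{S,D}$ satisfies that same operator, which is essentially the statement being proved, and uniqueness of the holomorphic solution normalized by constant term $1$ would need a separate argument.

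For comparison, the paper never routes the A-side computation through the weak del Pezzo resolution: since $S$ of degree greater than $2$ is either a smooth toric variety or a complete intersection in one, $\widetilde I_0^{S,D}$ is computed directly from~\cite{Gi97} (the mirror theorem for toric varieties and complete intersections therein), where the mirror-map subtlety does not intervene, and the period condition $I_{f_{S,D}}=\widetilde I_0^{S,D}$ is then verified by direct comparison of the two explicit series, with Example~\ref{example: S7} as the template; the Calabi--Yau and toric conditions are handled as you do. If you wish to keep your inductive structure, you must either establish the blow-up recursion directly for the corrected polynomials $f_{S,D}$ (not for $f_{\widetilde T,D}$), or compute the mirror map of $\widetilde T$ and show it is implemented by the marking recipe; either way, the computation you flagged as ``the main obstacle'' is not a cross-check but the proof itself.
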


\begin{proof}
It is well known that $S$ is either a smooth toric variety
or a complete intersection in a smooth toric variety.
This enables one to compute a series $\widetilde I^S$ and, since, $\widetilde I^{S,D}$ following~\cite{Gi97}.
Using this it is straightforward to check that the period condition for $f_{S,D}$ holds.
The Calabi--Yau condition holds by Compactification construction~\ref{compactification construction}. 
Finally the toric condition holds by construction.
(See Example~\ref{example: S7}.)
\end{proof}

\begin{proposition}
\label{proposition:mutations}
Consider two different Gorenstein toric degenerations $T_1$ and $T_2$ of a del Pezzo surface $S$. Let $\Delta_1=F(T_1)$ and $\Delta_2=F(T_2)$.
Consider families of Calabi--Yau compactifications of Laurent polynomials
with Newton polygons $\Delta_1$ and $\Delta_2$. Then there is a birational isomorphism of these families.
In other words, there is a birational isomorphism between affine spaces of Laurent polynomials
with supports in $\Delta_1$ and $\Delta_2$ modulo toric change of variables that preserves Calabi--Yau compactifications.
\end{proposition}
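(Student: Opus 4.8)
The plan is to produce the birational isomorphism from a chain of \emph{algebraic mutations} --- cluster-type birational automorphisms of the torus $(\CC^{*})^{2}$ --- one for each step of a chain of combinatorial mutations joining $\Delta_{1}$ to $\Delta_{2}$. So the first step, which I expect to be the main obstacle, is the purely combinatorial assertion that the fan polygons $\Delta_{1}$ and $\Delta_{2}$ of any two Gorenstein toric degenerations of a fixed del Pezzo surface $S$ of degree $d\geq 3$ lie in one mutation class of reflexive polygons. This is the two-dimensional case of the expected bijection between deformation families of del Pezzo surfaces and mutation classes of reflexive polygons (see~\cite{CCGK16} and references therein); concretely, since there are only finitely many reflexive polygons, one can verify it directly on the list, sorting the polygons by the value $(-K_{T^{\vee}})^{2}=\vol\Delta=12-d$ and keeping track of the two deformation components occurring for $d=8$. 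Granting this, it suffices to treat a single mutation $\Delta_{2}=\mathrm{mut}_{w}(\Delta_{1},F)$, where $w\in M$ is primitive and the factor $F\subset w^{\perp}$ is a segment such that every slice of $\Delta_{1}$ of negative $w$-height $h$ contains $|h|F$ as a Minkowski summand. Note that $|\Delta_{1}\cap N|=|\Delta_{2}\cap N|=13-d$, so a birational isomorphism of the two coefficient spaces is at least dimensionally possible.

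For such a mutation I would write down the corresponding algebraic mutation explicitly. Choose coordinates $(x,y)$ on $(\CC^{*})^{2}$ so that $w$ is dual to $y$ and the factor polynomial supported on $F$ is $g=g(x)$; then
$$
\mu_{w,g}\colon (x,y)\longmapsto (x,\ g(x)\,y)
$$
is a birational automorphism of $(\CC^{*})^{2}$, and for a Laurent polynomial $f=\sum_{h}C_{h}(x)y^{h}$ supported on $\Delta_{1}$ one has $\mu_{w,g}^{*}f=\sum_{h}C_{h}(x)\,g(x)^{h}\,y^{h}$, whose Newton polygon is $\Delta_{2}$ once the (generic) divisibility of $C_{h}$ by $g^{|h|}$ for $h<0$ holds. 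Since $\mu_{w,g}$ commutes with rescalings of the coordinates, it descends to a birational isomorphism between the spaces of Laurent polynomials supported on $\Delta_{1}$ and on $\Delta_{2}$ modulo toric change of variables. The key point is that $\mu_{w,g}$ does not involve the pencil parameter $\lambda$: it carries the pencil $\{f=\lambda\}_{\lambda\in\Aff^{1}}$ on $(\CC^{*})^{2}$ onto $\{\mu_{w,g}^{*}f=\lambda\}_{\lambda\in\Aff^{1}}$, i.e.\ it is a birational isomorphism of the two families of fibres over $\Aff^{1}$.

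It remains to pass to the compactifications. By Compactification construction~\ref{compactification construction}, both $f$ and $\mu_{w,g}^{*}f$ acquire log Calabi--Yau compactifications $f\colon Z_{1}\to\PP^{1}$ and $f\colon Z_{2}\to\PP^{1}$, smooth rational elliptic surfaces carrying a wheel of $d$ rational curves (an $I_{d}$ fibre) over $\infty$. Being a birational isomorphism of the families over the dense open $\Aff^{1}\subset\PP^{1}$, the map $\mu_{w,g}$ extends to a birational isomorphism $Z_{1}\dashrightarrow Z_{2}$ commuting with the morphisms to $\PP^{1}$; as both surfaces are smooth this is precisely a birational isomorphism of the two families, and it restricts to a birational isomorphism of the open Calabi--Yau total spaces $Y_{1}=Z_{1}\setminus D_{1}$ and $Y_{2}=Z_{2}\setminus D_{2}$ (both have trivial canonical class and are birational, so one is a Calabi--Yau compactification exactly when the other is). Composing the maps obtained along a chain of mutations connecting $\Delta_{1}$ to $\Delta_{2}$ yields the proposition.

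Besides the combinatorial input above, two technical points must be watched. First, one should check that the divisibility conditions on the negative $w$-slices are compatible with a dense open set of coefficients, so that $\mu_{w,g}$ is a birational isomorphism of the full affine coefficient spaces rather than merely of proper subvarieties --- if necessary one restricts to the relevant dense opens, which is harmless for the statement. Second, one should confirm that the extended map $Z_{1}\dashrightarrow Z_{2}$ sends the fibre over $\infty$ of one surface to that of the other; this holds because those fibres are assembled from the toric boundary wheels of $\widetilde T^{\vee}_{1}$ and $\widetilde T^{\vee}_{2}$, and $\mu_{w,g}$ is compatible with the toric boundary strata transverse to the $w$-direction.
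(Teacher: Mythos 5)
Your proposal is correct and takes essentially the same route as the paper: connect $\Delta_1$ and $\Delta_2$ by a chain of combinatorial mutations (a finite check, via~\cite{ACGK12}), realize each step as an algebraic mutation of $(\CC^*)^2$ giving a fiberwise birational isomorphism of the pencils, and conclude a birational identification of the compactified families (the paper phrases this last step as ``birational elliptic curves are isomorphic''). The divisibility caveat you flag is handled implicitly in the paper by applying the mutations to the explicitly constructed polynomials $f_{S,D}$, whose edge coefficients factor by construction (cf.\ Example~\ref{example: S7}), so the mutation intertwines the $\Pic(S)_\CC$-parametrizations up to a change of basis in $H^2(S,\ZZ)$; in your general-coefficient version the needed density does hold, since for reflexive polygons the shrinking side of any mutation only involves the height $-1$ slice, so a factor $g$ dividing that slice polynomial can be chosen for generic coefficients.
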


\begin{proof}
One can check that polygons $\Delta_1$ and $\Delta_2$ differ by (a sequence of) mutations (see, say,~\cite{ACGK12}).
These mutations agree with fiberwise birational isomorphisms of toric Landau--Ginzburg models
modulo change of basis in $H^2(S,\ZZ)$ by the construction. The statement follows from the fact that birational elliptic curves are isomorphic.
\end{proof}

\begin{remark}
\label{remark:canonical_the_same}
Let $D=0$. Then
the polynomial $f_{S,0}$ has
coefficients $1$ at vertices of its Newton polygon and $\binom{n}{k}$ at $k$-th
integral point of an edge of integral length $n$.
In other words, $f_{S,0}$ is binomial. 
\end{remark}

\begin{example}
\label{example: S7}
Let $S=S_7$. This surface has two Gorenstein toric degenerations: it is toric itself, and also it can be degenerated to
a singular surface which is obtained by a blow up of $\PP^2$, a blow up of a point on the exceptional curve, and a blow down
the first exceptional curve to a point of type $A_2$.

Let $\Delta_1$ be the polygon with vertices $(1,0)$, $(1,1)$, $(0,1)$, $(-1,-1)$, $(0,-1)$,
and let $D=a_0l+a_1e_1+a_2e_2$.
Then
$$
f_{\widetilde T_{\Delta_1},D}=f_{S,D}=x+y+e^{-a_0}\frac{1}{xy}+e^{-(a_0+a_1)}\frac{1}{y}+e^{-a_2}xy.
$$

Let $\Delta_2$ be the polygon with vertices $(1,0)$, $(0,1)$, $(-1,-1)$, $(1,-1)$,
and let $D=a_0l+a_1e_1+a_2e_2$.
Then
$$
f_{\widetilde T_{\Delta_2},D}=x+y+e^{-a_0}\frac{1}{xy}+e^{-(a_0+a_1)}\frac{1}{y}+e^{-(a_0+a_1+a_2)}\frac{x}{y},
$$
$$
f'_{S,D}=x+y+e^{-a_0}\frac{1}{xy}+\left(e^{-(a_0+a_1)}+e^{-(a_0+a_2)}\right)\frac{1}{y}+e^{-(a_0+a_1+a_2)}\frac{x}{y}.
$$
(Here $f_{S,D}$ and $f'_{S,D}$ are toric Landau--Ginzburg models for $(S,D)$ in different bases in $(\CC^*)^2$.)
One can easily check that the mutation
$$
x\to x,\ \ \ y\to \frac{y}{1+e^{-a_2}x}
$$
sends $f_{S,D}$ to $f'_{S,D}$.

The surface $S$ is toric, so by Givental
$$
\widetilde I_0^{S,D}= \sum_{k,l,m}  \frac{(2k+3l+2m)!e^{-a_0(k+l+m)-a_1k-a_2m}t^{2k+3l+2m}}{(k+l)!(l+m)!k!l!m!}
$$
(see~\cite{fanosearch}).
One can check that $\widetilde I_0^{S,D}=I_{f_{S,D}}=I_{f'_{S,D}}$.
\end{example}

\section{Minkowski toric Landau--Ginzburg models}
\label{section:main theorem}

\begin{lemma}
\label{lemma:restrictions Minkowski}
Let $f$ be a Minkowski Laurent polynomial.
Then for any face $Q$ of $\Delta$ the curve $R_{Q,f}$ is a union of (transversally intersecting) smooth rational curves (possibly with multiplicities).
\end{lemma}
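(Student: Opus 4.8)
The statement asks that for a Minkowski Laurent polynomial $f$ with Newton polytope $\Delta$, and for any face $Q$ of $\Delta$, the associated toric restriction $R_{Q,f}=\{f|_Q=0\}\subset\PP_Q$ (in the notation of Fact~3) is a reduced-or-multiple union of transversally intersecting smooth rational curves. Let me sketch how I would attack this.

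The first step is a reduction to the facet case. If $Q$ has dimension $0$ or $1$, then $R_{Q,f}$ is empty or a point (on an edge, $f|_Q$ is a polynomial in one variable, so $R_{Q,f}$ is a finite set of points in $\PP^1$), so there is nothing to prove. If $Q=\Delta$ is three-dimensional, $R_{\Delta,f}=\{f=0\}$ is just the general fiber, which is not what is at issue here — wait, more carefully, the faces that matter are the facets $Q$, which are the Minkowski polygons; the three-dimensional ``face'' gives the interior fiber and is handled elsewhere, and lower faces are subsumed as faces of facets. So the heart of the matter is: for a facet $Q$, which by hypothesis is an $A_n$-Minkowski polygon with an admissible lattice Minkowski decomposition $Q=Q_1+\ldots+Q_s$ and $f|_Q=f_{Q_1}\cdots f_{Q_s}$, show that the curve $\{f_{Q_1}\cdots f_{Q_s}=0\}\subset\PP_Q$ is a transverse union of smooth rational curves.

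The key step, then, is to analyze the individual factor $\{f_{Q_i}=0\}$. For a polygon $P$ of type $A_n$ with the canonical Laurent polynomial $f_P=x^u+\sum_{k=0}^n\binom{n}{k}x^{v_k}$, the $v_k$ are collinear, so after a lattice change we may take $v_k=(k,0)$ and $u=(0,1)$, giving $f_P = y + (x+1)^n$ (up to monomial rescaling), whose zero locus in $\PP_P$ is a single rational curve — indeed $y=-(x+1)^n$ parametrizes it, and one checks it is smooth in the toric surface $\PP_P$ by examining the chart structure at the vertices. Because the Minkowski decomposition is a \emph{lattice} decomposition — the affine sublattices spanned by the $P_i\cap\ZZ^2$ sum to that of $Q\cap\ZZ^2$ — each factor $f_{Q_i}$ lives in its own coordinate direction, so the divisors $\{f_{Q_i}=0\}$ meet each other transversally. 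Thus $R_{Q,f}$ is the sum of $s$ smooth rational curves crossing transversally, with multiplicities arising only if two summands $Q_i$ coincide (up to lattice translation), accounting for the ``possibly with multiplicities'' clause.

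The main obstacle I anticipate is the transversality/smoothness claim at the torus-invariant boundary of $\PP_Q$, i.e., verifying that $\{f_P=0\}$ is smooth not just in the open torus $(\CC^*)^2$ but along the edges of $\PP_Q$, and that the several factors do not develop a non-transverse intersection or a worse-than-nodal singularity where they run into a boundary stratum of $\PP_Q$. This is precisely where the $A_n$ hypothesis (two edges of lattice length one) and the admissibility of the lattice Minkowski decomposition do the work: the length-one edges force the relevant local equations to be monomial-times-unit in the boundary charts, and the lattice condition prevents the factors from being supported in a common proper sublattice where a fake tangency could occur. I would handle this by an explicit local computation in each affine chart of $\PP_Q$ corresponding to a vertex of $Q$, reducing in each case to the model $y+(x+1)^n$ or a product of such in independent variables. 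Once the single-factor smoothness and the pairwise transversality are established chart by chart, the lemma follows by taking the union over all facets and all their subfaces.
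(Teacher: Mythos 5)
Your overall route is the same as the paper's: work facet by facet, bring an $A_n$ summand into the normal form $y+(1+x)^n$, observe that it cuts out a smooth rational curve, and let repeated summands account for the multiplicities. For a facet that is itself of type $A_n$ your computation is literally the paper's: there the stratum $F_Q$ is the Veronese image of $\PP(1,1,n)$ and the curve is $\{(y_0+y_1)^n+y_2=0\}$, which projects isomorphically to $\PP^1$, so it is a smooth rational curve of multiplicity one.

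The gap is in the multi-summand case. Your key assertion --- that because the decomposition is an admissible \emph{lattice} Minkowski decomposition, ``each factor $f_{Q_i}$ lives in its own coordinate direction'', whence smoothness of each closure and transversality --- cannot be taken at face value: all summands $Q_i$ sit in the same rank-two lattice and $F_Q$ is a surface, so as soon as there are two two-dimensional summands, or more than two summands in total, the factors necessarily share the two torus coordinates and cannot depend on disjoint variables. The separation of variables has to be \emph{produced}, and this is exactly what the paper's proof supplies at this step: it realizes $F_Q$ as the section, by the linear space $L=\{x_{b_1,\ldots,b_n}=x_{b'_1,\ldots,b'_n}\}$, of the Segre product of the Veronese-embedded surfaces $F_{Q_i}$, obtaining birational identifications $F_{Q_i}\to F_Q$ (and $\PP^1$-bundle structures for the $A_0$ summands) in which $f|_Q$ genuinely splits into the factors $f_{Q_i}$. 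That identification is what shows that the closure \emph{inside $F_Q$} of each $\{f_{Q_i}=0\}$ --- not merely the curve inside the toric surface of the summand $Q_i$ itself --- is the standard smooth rational curve (or a ruling, for $A_0$ summands), with $B_r=B_s$ exactly when $Q_r=Q_s$. Without this, or an equivalent careful local analysis along the boundary divisors of $F_Q$ (using that the components miss the torus-fixed points because the vertex coefficients of $f|_Q$ are nonzero), your proposed chart-by-chart reduction ``to a product of such models in independent variables'' asserts the conclusion rather than proving it. The transversality clause is parenthetical and is not belabored in the paper either, so that is not the main issue; the missing ingredient is the product/linear-section description of $F_Q$ (or a substitute for it) in the case of several Minkowski summands.
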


\begin{proof}
Let $Q$ be of type $A_k$, $k>0$.
In appropriate basis $Q$ has vertices $u=(0,1)$, $v_0=(0,0)$, $v_k=(k,0)$, and integral points $v_i=(i,0)$.
Let $x,x_0,\ldots, x_k$ be coordinates corresponding to $u, v_0,\ldots, v_k$. Then, according to Fact $1$, $F_Q$ is given
by relations $x_ix_j=x_rx_s$, $i+j=r+s$, in $\PP[x:x_0:\ldots:x_k]$. This means that
$F_Q=v_k\left(\PP(1,1,k)\right)$ is an image of $k$-th Veronese map of $\PP(1,1,k)$. Let $y_0,y_1,y_2$ be coordinates
on $\PP(1,1,k)$, where a weight of $y_2$ is $k$. One has $R_{Q,f}=\{\sum x_i \binom{n}{i}+x=0\}\cap F_Q\subset \PP^Q$. Hence
$$
R_{Q,f}=\{(y_0+y_1)^k+y_2=0\}\subset \PP(1,1,k),
$$ so $R_{Q,f}$ projects isomorphically to $\PP^1$ under projection of $\PP(1,1,k)$ on $\PP^1$
along the third coordinate. Thus $R_{Q,f}$ is a smooth rational curve with multiplicity one.

Now let $Q=Q_1+\ldots+Q_n$ be an admissible lattice Minkowski decomposition, where $Q_i$ is of type $A_{k_i}$, such that $f|_Q=f_{Q_1}\cdot \ldots\cdot f_{Q_n}$.
Then, as above, there are Veronese embeddings $v_{k_i}\colon \PP(1,1,k_i)\to \PP^{k_i+1}$, where $\PP^{k_i+1}$ are different projective
spaces. 
Let $\Pi$ be a product of these projective spaces over all 
$Q_i$, so that coordinates in $\Pi$ can be described as collections of integral points in $(Q_1,\ldots,Q_n)$. Denote the map $F_{Q_1}\times\ldots\times F_{Q_n}\to \Pi$ by $\varphi$. Let $\psi\colon \Pi \to \PP_S$ be a Segre embedding.
Let $\PP$ be the projective space whose
coordinates correspond to integral points in $Q$.
Let $x_{b_1,\ldots,b_n}$ be natural coordinates in $\PP_S$.
The space $\PP$ can be described as a linear section of $\PP_S$ that cuts out by the linear space
$$
L=\{x_{b_1,\ldots,b_n}=x_{b'_1,\ldots,b'_n}\mid b_1+\ldots+b_n=b'_1+\ldots+b'_n\},
$$
and $F_Q=\psi \varphi (F_{Q_1}\times\ldots\times F_{Q_n})\cap L$ in $\PP_S$. This gives birational isomorphisms
$F_{Q_i}\to F_Q$ for $k_i>0$ and $\PP^1$-bundles for $k_i=0$. (In other words, coordinates on $F_{Q_i}$ correspond to points of type $a+b_1+\ldots +b_{n-1}$ on $F_{Q}$,
where $a\in Q_i$ and $b_j$ are some fixed points on $Q_j$, $j\neq i$.)
In these coordinates the function $f|_Q$ splits into $n$ functions $f_{Q_1}, \ldots, f_{Q_n}$, such that $f_{Q_i}=f_{Q_j}$ for $Q_i=Q_j$.
This gives the required splitting $R_{Q,f}=B_1\cup\ldots\cup B_n$, where $B_i$ is isomorphic to $R_{Q_i,f_i}$ for $k_i>0$ and a standard linear section
$f_i$ on $\PP(1,1,a_i)$ as above, $B_j$ is a line (fiber)
for $k_j=0$, and $B_r=B_s$ for $Q_r=Q_s$.
\end{proof}

\begin{proposition}
\label{proposition:CY compactification}
Let $W$ be a smooth threefold. Let $F$ be a one-dimensional anticanonical linear system on $W$
with reduced fiber $D=F_\infty$.
Let a base locus $B\subset D$ be a union of smooth curves (possibly with multiplicities) such that for any two components
$D_1,D_2$ of $D$ one has $D_1\cap D_2\not\subset B$. Then there is a resolution of the base locus $f\colon Z\to \PP^1$
with a smooth total space $Z$ such that
$-K_Z=f^{-1}(\infty)$.
\end{proposition}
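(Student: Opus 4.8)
The plan is to iteratively blow up the components of the base locus $B$ and check, one blow-up at a time, that the total space stays smooth and that the anticanonical class is controlled in exactly the way it was in Compactification construction~\ref{compactification construction} for surfaces. First I would record the local geometry: since $W$ is smooth and $F$ is an anticanonical pencil with a reduced member $D = F_\infty$, near a general point of a component $B_i$ of the base locus the other general fibre $D'$ and $D$ meet transversally along $B_i$, so locally we have two smooth surfaces in a smooth threefold meeting along a smooth curve. Blowing up $B_i$ in a neighbourhood where it is a smooth (reduced) curve contained in the smooth locus of $W$ keeps the ambient threefold smooth; the key crepancy computation is that for a smooth centre of codimension $2$ the discrepancy of the exceptional divisor is $1$, while the pencil (pulled back) vanishes to order $1$ along that centre, so $\pi^*(-K_W) - \sum E = \pi^*(D') = \widetilde{D'} + \sum E$ and hence $-K_Z = \widetilde D + (\text{exceptional contributions that cancel})$; I would phrase this exactly as the displayed computation $-K_Z = \pi^*(-K_W) - \sum E_i = D + \sum E_i - \sum E_i = D$ in the surface case, now in dimension three.

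The subtlety is the multiplicities and the points where several components of $B$ or several components of $D$ come together. Here the hypothesis $D_1 \cap D_2 \not\subset B$ for any two components $D_1, D_2$ of $D$ is precisely what is needed: it guarantees that after resolving, no exceptional divisor created over $B$ is forced to lie in the fibre $f^{-1}(\infty)$ with the wrong multiplicity, and that the strict transform of $D$ remains the full fibre over infinity (no component of $D$ gets ``used up'' by the base locus). Concretely I would order the blow-ups so that at each stage the centre is a connected smooth reduced curve lying in the smooth locus of the current model and transverse (or at worst tangent in a controlled way) to the fibre structure; when $B$ has a component $B_i$ appearing with multiplicity $m_i$ in a general fibre, one blows up $m_i$ times along the successive strict transforms, and at each step the local picture is again ``smooth surface meeting smooth threefold along a smooth curve'' after separating the remaining multiplicity onto the exceptional divisor. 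Since $B$ is a finite union of smooth curves this process terminates.

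The main obstacle I anticipate is bookkeeping the crepancy over the special points: where two components $B_i$, $B_j$ of $B$ meet, or where the base curve is tangent to a fibre, or where a component of $B$ meets the singular locus of $D$ (a node of the wheel-type configuration $D$). At such a point a single blow-up along $B_i$ may not leave the total space smooth, and one needs to blow up the point first, or blow up along $B_i$ and then clean up an additional isolated singularity, again checking it is crepant relative to the fibre. The hypothesis rules out the worst of these: because $D_1 \cap D_2 \not\subset B$, the curves of $B$ can only meet the double locus of $D$ in finitely many points, and there the blow-up of that point followed by the blow-up of the strict transform of $B_i$ resolves things with the expected discrepancies; the verification that the exceptional divisors over such points do \emph{not} enter $f^{-1}(\infty)$ (equivalently, that they are sections or components of finite fibres, exactly as in Remark~\ref{remark: Hodge numbers for Z}) is the delicate part. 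Once that is checked locally at each point of $B$, globally one sets $Z$ to be the result of the whole sequence of blow-ups, $f\colon Z \to \PP^1$ the induced map, and the equality $-K_Z = f^{-1}(\infty)$ follows by adding up the local crepancy identities, with $D$ (or rather its strict transform, still denoted $D$) being the full fibre over infinity.
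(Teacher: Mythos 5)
Your overall strategy is the paper's: blow up the base curves one at a time, use the codimension-two discrepancy computation $-K_{W'}=\pi^*(-K_W)-E=D'+E-E=D'$, which is valid because $D$ is reduced and, thanks to the hypothesis $D_1\cap D_2\not\subset B$, has multiplicity exactly one along each base curve, and iterate. The concrete misstep is the ``obstacle'' you anticipate at special points: the blow up of a smooth curve in a smooth threefold is \emph{always} smooth, regardless of how that curve meets other components of $B$ or the double locus of $D$, so there is nothing to ``clean up'' and no reason to blow up points first. Worse, the auxiliary point blow-ups you propose would actually endanger the statement: a point blow up on a smooth threefold has discrepancy $2$, while the multiplicity of the fiber over infinity (and of the mobile part of the pencil) at such a point need not match it, so in general the exceptional divisor of a point blow up enters $-K$ and $f^{-1}(\infty)$ with different coefficients — this is exactly the ``delicate part'' you flag and leave unverified, and it is both problematic and unnecessary.

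The other place where your write-up should be tightened is the inductive bookkeeping and termination. After blowing up one component $C$ of $B$, the paper re-verifies \emph{all} hypotheses of the proposition on $W'$: the fiber over infinity is the reduced strict transform $D'$, and the new base locus is $B$ or $B\setminus C$, possibly together with a new smooth curve $C'\cong E\cap D'_i$; there are no isolated base points because the base locus is an intersection of two divisors in a smooth threefold, hence of pure codimension two. Because such new curves $C'$ can appear, your termination argument (``blow up $m_i$ times along each component of $B$, then stop'') is too naive as stated; the paper instead concludes finiteness from the fact that $(W,F)$ is a canonical pair, so the base locus is resolved after finitely many blow-ups. With these corrections — drop the point blow-ups, state the induction hypotheses explicitly, and justify termination via the canonical pair — your argument coincides with the paper's proof.
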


\begin{proof}[Proof (cf. Compactification construction~\ref{compactification construction}).]
Let $\pi\colon W'\to W$ be a blow up of one component $C$ of $B$ on $W$.
Since $\pi$ is a blow up of a smooth curve on a smooth variety, $W'$ is smooth.
Let $E$ be an exceptional divisor of the blow up.
Let $D'=\cup D'_i$ be a proper transform of $D=\cup D_i$.
Since multiplicity of $C$ in $D$ is $1$, one gets
$$
-K_{W'}=\pi^*(-K_{W})-E=D'+E-E=D'.
$$
Moreover, a base locus of the family on $W'$ is the same as $B$ or $B\setminus C$,
possibly together with a smooth curve $C'$ which is isomorphic to
$E\cap D'_i$;
in particular, $C$ is isomorphic to $\PP^1$.
(There are no isolated base points as the base locus is an intersection of two divisors on a smooth variety.)
Thus all conditions of the proposition hold for $W'$. Since $(W,F)$ is a canonical pair,
the base locus $B$ can be resolved in finite number of blow ups. This gives the required resolution.
\end{proof}

\begin{theorem}
\label{theorem:main}
Any Minkowski Laurent polynomial in three variables admits a log Calabi--Yau compactification.
\end{theorem}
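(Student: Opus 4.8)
The plan is to realize this as a direct application of the machinery already assembled in the excerpt, mimicking Compactification construction~\ref{compactification construction} but in one dimension higher. Let $f$ be a Minkowski Laurent polynomial in three variables, and let $\Delta=N(f)$ be its Newton polytope. By definition $\Delta$ is reflexive, so it has a dual reflexive polytope $\nabla$; let $T^\vee=T_\nabla$ and let $\widetilde T^\vee=\widetilde T_\nabla$ be the maximally triangulated toric variety. By Lemma~\ref{lemma: 3dim smoothness}, $\widetilde T^\vee$ is smooth, and the resolution $\widetilde T^\vee\to T^\vee$ is crepant since a maximal triangulation of a reflexive polytope uses only lattice points on its boundary. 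Using Facts $1$--$4$, I would compactify the family of fibers $\{f=\lambda\}\subset(\CC^*)^3$ to the linear pencil inside the anticanonical system of $T^\vee$ spanned by a general member $R_f$ and the boundary divisor $D$ (which corresponds to the constant Laurent polynomial, Fact $4$); pulling back to $\widetilde T^\vee$ gives a pencil on a smooth threefold whose member over $\infty$ is the strict transform of $D$, which is reduced (it is the toric boundary of $\widetilde T^\vee$).

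Next I would check that the hypotheses of Proposition~\ref{proposition:CY compactification} are met with $W=\widetilde T^\vee$ and $F$ this pencil. The reducedness of $D=F_\infty$ is clear. The key input is the structure of the base locus $B$: since $B\subset D$ and $D$ is a union of toric divisors $D_Q$ indexed by facets $Q$ of $\nabla$, the base locus meets $D_Q$ precisely in $R_{Q,f}=R_f|_{D_Q}$ (the restriction of the general member), which by Lemma~\ref{lemma:restrictions Minkowski} is a union of transversally intersecting smooth rational curves, possibly with multiplicities, for each facet. Hence $B=\bigcup_Q R_{Q,f}$ is a union of smooth (rational) curves with multiplicities, as required. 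It remains to verify the transversality-type condition ``for any two components $D_1,D_2$ of $D$ one has $D_1\cap D_2\not\subset B$'': two toric divisors of $\widetilde T^\vee$ meet along a toric curve corresponding to an edge $e$ of $\nabla$, and $D_1\cap D_2\cap R_f$ equals the finite scheme $\{f|_e=0\}$ on that curve, which is a proper subset of the curve because $f|_e$ is a nonzero, nonconstant Laurent polynomial in one variable (the vertices of $e$ carry coefficient $1$, so $R_f$ avoids the torus-fixed points at the ends of the curve). Applying Proposition~\ref{proposition:CY compactification} then produces $f\colon Z\to\PP^1$ with $Z$ smooth and $-K_Z=f^{-1}(\infty)$; cutting out the strict transform of $D$ gives the accompanying (non-compact) Calabi--Yau $Y$, so this is a log Calabi--Yau compactification.

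The main obstacle, and the point that needs the most care, is precisely the combinatorial-geometric verification that the total base locus $B=\bigcup_Q R_{Q,f}$ is globally a union of \emph{smooth} curves meeting transversally — Lemma~\ref{lemma:restrictions Minkowski} gives this facet by facet, but one must argue that where two facets $Q,Q'$ share an edge $e$ the curves $R_{Q,f}$ and $R_{Q',f}$ glue compatibly along the toric curve $D_e$ (their common intersection with $R_f$ being the single finite scheme $\{f|_e=0\}$, with matching multiplicities coming from the Minkowski decomposition), so that no new singular points of $B$ are created along the $1$-skeleton of $D$. Once $B$ is known to be a union of smooth curves sitting on the smooth threefold $\widetilde T^\vee$ with the stated incidence property, Proposition~\ref{proposition:CY compactification} does the rest: each blow-up along a (multiplicity-one, by Fact~$4$-type reasoning at the ends and Lemma~\ref{lemma:restrictions Minkowski} for the multiplicities inside) component of $B$ keeps the total space smooth and the fiber at infinity anticanonical, the base locus can only get simpler, and the process terminates since $(\widetilde T^\vee,F)$ is a canonical (indeed log Calabi--Yau) pair. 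I would also remark, following the del Pezzo discussion, that the multiplicities appearing in $B$ (from the $A_{k_i}$ summands of the Minkowski decompositions) force some exceptional divisors of the resolution to become extra components of the fiber over infinity, which is how one reads off the description of that fiber.
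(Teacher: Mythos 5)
Your proposal follows essentially the same route as the paper: compactify the pencil spanned by a general member and the toric boundary inside $T^\vee$, pass to the crepant resolution $\widetilde T^\vee$ (Lemma~\ref{lemma: 3dim smoothness}), use Lemma~\ref{lemma:restrictions Minkowski} to see the base locus is a union of smooth rational curves avoiding the torus-invariant strata (Fact~$4$, nonzero vertex coefficients), and then apply Proposition~\ref{proposition:CY compactification}. The only cosmetic slip is that facets and edges indexing the strata of $T^\vee$ should be faces of $\Delta$ rather than of $\nabla$ (Fact~$3$); the mathematics is otherwise the paper's argument.
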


\begin{proof}
Let $f$ be the Minkowski Laurent polynomial.
Recall that the Newton polytope $\Delta$ of $f$ is reflexive,
and the (singular Fano) toric variety whose fan polytope is $\nabla=\Delta^\vee$ is denoted by $T^\vee$.
The family of fibers of the map given by $f$ is a family $\{f=\lambda\}$, $\lambda\in \CC$.
Members of this family have natural compactifications to anticanonical sections of $T^\vee$. This family
(more precise, its compactification to a family $\{\lambda_0f=\lambda_1\}$ over $\PP[\lambda_0:\lambda_1]$)
is generated by a general member and the member that corresponds to the constant Laurent polynomial. The latter is nothing but
the boundary divisor $D$ of $T^\vee$. Denote the obtained pencil on $T^\vee$ by $f\colon Z_{T^\vee}\dashrightarrow \PP^1$
(we use the same notation $f$ for the Laurent polynomial, the corresponding pencil, and resolutions of this pencil for simplicity). By Lemma~\ref{lemma:restrictions Minkowski},
the base locus of $f$ on $Z_{T^\vee}$
is a union of smooth (rational) curves (possibly with multiplicities).
By Lemma~\ref{lemma: 3dim smoothness}, the variety $\widetilde T^\vee$ is a crepant resolution of $T^\vee$.
By definition of a Newton polytope, coefficients of the Minkowski Laurent polynomial at vertices of $\Delta$ are non-zero.
This means that the base locus does not contain any torus invariant strata of $T^\vee$ since it does not contain torus invariant
points by Fact $4$. 
Thus we get a family $f\colon Z_{\widetilde T^\vee}\dashrightarrow \PP^1$, whose total space is smooth and a base locus is
a union of (transversally intersecting) smooth curves (possibly with multiplicities) again.
By Proposition~\ref{proposition:CY compactification}, there is a resolution $f\colon Z\to \PP^1$ of the base locus on
$Z_{\widetilde T^\vee}$ such that $Z$ is smooth and $-K_Z=f^{-1}(\infty)$.
Thus $Z$ is the required log Calabi--Yau compactification, and
$Y=Z\setminus f^{-1}(\infty)$ is a Calabi--Yau compactification.
\end{proof}

\begin{remark}
The construction of Calabi--Yau compactification is not canonical: it depends on an order of blow ups of base locus components.
However all log Calabi--Yau compactifications are isomorphic in codimension one.
\end{remark}

There are $105$ families of smooth Fano threefolds. By Remark~\ref{remark: cDV threefolds} and~\cite{CCGK16}, there are $98$ ones among them that have degenerations to
toric varieties whose fan polytopes coincide with Newton polytopes of Minkowski Laurent polynomials satisfying period condition (for trivial divisors). Thus they have toric Landau--Ginzburg models by Theorem~\ref{theorem:main}.
Two other varieties, $X_{1-1}$ and $X_{1-11}$, have toric Landau--Ginzburg models by~\cite{Prz13}.
The following proposition, in a spirit of~\cite{Prz13}, proves the existence of (log) Calabi--Yau compactifications
for some degenerations of other threefolds.

\begin{proposition}
\label{proposition:CY hyperelliptic}
Fano threefolds $X_{2-1}$, $X_{2-2}$, $X_{2-3}$, $X_{9-1}$, and $X_{10-1}$
have toric Landau--Ginzburg models.
\end{proposition}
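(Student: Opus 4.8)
The plan is to handle the five remaining Fano threefolds by an explicit, case-by-case version of the argument used to prove Theorem~\ref{theorem:main}: write down a weak Landau--Ginzburg model (a Laurent polynomial satisfying the period condition, which is already available in the literature, e.g.~\cite{fanosearch},~\cite{Prz13}) for each of $X_{2-1}$, $X_{2-2}$, $X_{2-3}$, $X_{9-1}$, $X_{10-1}$, and then produce a Calabi--Yau compactification by hand. Since none of these threefolds admits a reflexive toric degeneration (Remark~\ref{remark: cDV threefolds}), the Newton polytope route through $T^\vee$ is not directly available; instead, as announced in the introduction, one compactifies the family $f\colon(\CC^*)^3\to\CC$ fiberwise inside a fixed ambient family --- families of (singular) quartics in $\PP^3$ for some of the cases, or hypersurfaces of bidegree $(2,3)$ in $\PP^1\times\PP^2$ for the others --- chosen so that the generic fiber has trivial canonical class and the total space of the compactified pencil has only mild (crepant-resolvable) singularities.

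The key steps, in order: (1) For each threefold, recall/choose a concrete Laurent polynomial $f$ satisfying the period condition; these are small, with few monomials, so the embeddings below are explicit. (2) Find an embedding of $(\CC^*)^3$ into the chosen ambient space ($\PP^3$ or $\PP^1\times\PP^2$) under which the pencil $\{\lambda_0 f=\lambda_1\}$ compactifies to a pencil of quartics, resp. of $(2,3)$-hypersurfaces; verify that the general member $S_\lambda$ is a K3 surface with at worst du Val singularities and that the special member over $\infty$ (the ``boundary'') is reduced with the transversality property needed below. (3) Resolve the ambient total space of the pencil: blow up (crepantly, using that the singularities are canonical / du Val in families) to obtain a smooth threefold carrying an anticanonical pencil, then analyse and resolve the base locus. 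Here one checks, exactly as in Lemma~\ref{lemma:restrictions Minkowski} and Proposition~\ref{proposition:CY compactification}, that the base locus is a union of smooth curves with multiplicity one meeting the components of $D=F_\infty$ transversally, so that each blow-up of a base-curve is crepant with respect to $-K=D$; finitely many such blow-ups terminate because the pair is canonical, yielding $f\colon Z\to\PP^1$ with $-K_Z=f^{-1}(\infty)$. (4) Set $Y=Z\setminus f^{-1}(\infty)$; this is the sought Calabi--Yau compactification, so $f$ is a toric Landau--Ginzburg model for the corresponding $(X,0)$ once the toric condition is invoked (for these it is classical, or follows from the references cited), completing the proof.

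The main obstacle I expect is Step~(3): unlike the Minkowski case, the ambient pencils here are genuinely singular --- the quartics acquire du Val singularities along curves, and the total space of the pencil will have singularities worse than those of the Minkowski $T^\vee$ --- so one must check carefully that a \emph{simultaneous} (fiberwise) crepant resolution of the ambient total space exists and that, after it, the hypotheses of Proposition~\ref{proposition:CY compactification} are literally met (reduced $F_\infty$, base locus $\subset D$ a union of smooth multiplicity-one curves, $D_1\cap D_2\not\subset B$ for components of $D$). This is where the precise choice of $f$ and of the ambient embedding matters, and why the argument is run ``by hand'' for each of the five varieties rather than by a uniform polytope statement; once the resolved pencil has these properties, the rest is an application of Proposition~\ref{proposition:CY compactification}. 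A secondary, purely bookkeeping task is to match each chosen $f$ with the correct threefold and confirm the period condition, which is routine given the $I$-series computations already in the literature.
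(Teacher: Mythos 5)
Your overall plan coincides with the paper's strategy at the top level (treat the five varieties one by one, take the known Givental-type Laurent polynomials, compactify the pencils to families of quartics in $\PP^3$ or of bidegree $(2,3)$ hypersurfaces in $\PP^1\times\PP^2$, then resolve crepantly), but there is a genuine gap exactly at the step you yourself flag as the obstacle, and the mechanism you propose there is not the one that works. For these five varieties there is no reflexive polytope and hence no analogue of Lemma~\ref{lemma:restrictions Minkowski}: nothing guarantees a reduced fiber over infinity whose components meet transversally off the base locus, nor a base locus consisting of smooth multiplicity-one curves, so the hypotheses of Proposition~\ref{proposition:CY compactification} cannot simply be ``checked exactly as in'' the Minkowski case, and your step (3) is left unproved. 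The paper does not attempt this log Calabi--Yau route at all. Instead, for each variety it performs an explicit \emph{birational} (not monomial) change of variables on the torus, clears denominators, and exhibits the whole family $\{f_{i-j}=\lambda\}$ as an explicit hypersurface in $\PP^3\times\Aff^1$ (quartics) or $\PP^1\times\PP^2\times\Aff^1$ (bidegree $(2,3)$), whose \emph{total space} has trivial canonical class and whose singularities are verified to be ordinary double points together with curves of du Val singularities; successive crepant blow ups of these curves terminate in a threefold with only nodes, which admit an algebraic small resolution. The outcome is a smooth Calabi--Yau total space containing $(\CC^*)^3$, which is all that the Calabi--Yau condition of Definition~\ref{definition: toric LG} demands --- no claim of a log Calabi--Yau compactification with $-K_Z=f^{-1}(\infty)$ is made (or needed) in these cases, so your insistence on reduced $F_\infty$ and boundary transversality asks for more than is available and more than is required.

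Two smaller points. First, the concrete changes of variables and the verification of the singularity structure of each compactified family are the actual content of the proof; a proposal that defers them (``the precise choice of $f$ and of the ambient embedding matters'') has not yet proved the proposition. Second, the toric condition for these five varieties is not ``classical'': in the paper it rests on the forthcoming works~\cite{IKKPS} and~\cite{DHKLP}, while the period condition is taken from~\cite{fanosearch} for the specific polynomials $f_{2-1},\dots,f_{10-1}$ written down via~\cite{Gi97} and~\cite{Prz10}; your argument should fix those polynomials explicitly before invoking either condition.
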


\begin{proof}
The Fano variety $X_{2-1}$ is a hypersurface section of type $(1,1)$ in $\PP^1\times X_{1-11}$ in an anticanonical embedding;
in other words, it is a complete intersection of hypersurfaces of types $(1,1)$ and $(0,6)$ in $\PP^1\times \PP(1,1,1,2,3)$.
The Fano variety $X_{2-2}$ is a hypersurface in a certain toric variety, see~\cite{CCGK16}.
The Fano variety $X_{2-3}$ is a hyperplane section of type $(1,1)$ in $\PP^1\times X_{1-12}$ in an anticanonical embedding;
in other words, it is a complete intersection of hypersurfaces of types $(1,1)$ and $(0,4)$ in $\PP^1\times \PP(1,1,1,1,2)$.
Finally one has $X_{9-1}=\PP^1\times S_2$ and $X_{10-1}=\PP^1\times S_1$.

For a variety $X_{i-j}$ construct its Givental's type Landau--Ginzburg models, see~\cite{Gi97}, for a compact
explanation see~\cite{PSh14}. Then present it by Laurent polynomial $f_{i-j}$, see, for instance,~\cite{Prz10}, and~\cite{Prz13}.
It satisfies the period condition by~\cite{fanosearch}, and it satisfies the toric condition by~\cite{IKKPS} and~\cite{DHKLP}.
In a spirit of~\cite{Prz13} compactify the family given by $f_{i-j}$ to a family of (singular)
anticanonical hypersurfaces in $\PP^1\times \PP^2$ or $\PP^3$
and then crepantly resolve singularities of a total space of the family.
Consider these cases one by one.

Givental's Landau--Ginzburg model for $X_{2-1}$ is
a complete intersection
$$
\left\{
  \begin{array}{ll}
    u+v_0=0,  \\
    v_1+v_2+v_3=0
  \end{array}
\right.
$$
in $\Spec \TT[u,v_0,v_1,v_2,v_3]$
with a function
$$
u+\frac{1}{u}+v_0+v_1+v_2+v_3+\frac{1}{v_1v_2^2v_3^3}.
$$
After birational change of variables (see~\cite{Prz10})
$$
v_1=\frac{x}{x+y+1},\ \ v_2=\frac{y}{x+y+1},\ \ v_3=\frac{1}{x+y+1},\ \ u=\frac{z}{z+1},\ \ v_0=\frac{1}{z+1}
$$
one, up to an additive shift, gets a function
$$
f_{2-1}=\frac{(x+y+1)^6(z+1)}{xy^2}+\frac{1}{z}
$$
on a torus $\Spec \TT[x,y,z]$.

Consider a family $\{f_{2-1}=\lambda\}$, $\lambda\in \CC$.
Make a birational change of variables (cf. the proof of~\cite[Theorem $18$]{Prz13})
$$
x=\frac{1}{b_1}-\frac{1}{b_1^2b_2}-1,\ \ y=\frac{1}{b_1^2b_2},\ \ z=\frac{1}{a_1}-1
$$
and multiply the obtained expression by a denominator. We see that the family is birational to
$$
\{(1-a_1)b_2^3=\left((1-a_1)\lambda-a_1\right)a_1(b_1b_2-b_1^2b_2-1)\}\subset \Aff[a_1,b_1,b_2]\times \Aff[\lambda].
$$
Now this family can be compactified to a family of hypersurfaces of bidegree $(2,3)$ in $\PP^1\times \PP^2$ using the embedding
$\TT[a_1,b_1,b_2]\hookrightarrow \PP[a_0:a_1]\times \PP[b_0:b_1:b_2]$.
The (non-compact) total space of the family has trivial canonical class and its singularities are a union of (possibly) ordinary double points and a rational
curves which are du Val along a line in general points. Blow up any of these curves. We get
singularities of the similar type again. After several crepant blow ups one approaches to a threefold with just ordinary double points;
these points admit algebraic small resolution. This resolution completes the construction of the Calabi--Yau compactification.
Note that the total space $(\CC^*)^3$ of the initial family is embedded to the resolution.

In the similar way one gets Calabi--Yau compactifications for the other varieties.
One has
$$
f_{2-2}=\frac{(x+y+z+1)^2}{x}+\frac{(x+y+z+1)^4}{yz}.
$$
A change of variables
$$
x=ab,\ \ y=bc,\ \ z=c-ab-bc-1
$$
applied to a family $\{f_{2-2}=\lambda\}$ and a multiplication on a denominator give a family of quartics
$$
ac^3=(c-ab-bc-1)(\lambda ab-c^2).
$$
The embedding $\Spec \TT[a,b,c]\hookrightarrow \PP[a:b:c:d]$ gives a compactification
to a family of singular quartics over $\Aff^1$.

One has 
$$
f_{2-3}=\frac{(x+y+1)^4(z+1)}{xyz}+z+1.
$$
A change of variables
$$
x=ac,\ \ y=a-ac-1,\ \ z=\frac{b}{c}-1
$$
applied to a family $\{f_{2-3}=\lambda\}$ and a multiplication on a denominator give a family
$$
a^3b=(\lambda c-b)(b-c)(a-ac-1).
$$
The embedding $\Spec \TT[a,b,c]\hookrightarrow \PP[a:b:c:d]$ gives a compactification
to a family of singular quartics over $\Aff^1$.

One has
$$
f_{9-1}=x+\frac{1}{x}+\frac{(y+z+1)^4}{yz}.
$$
A change of variables
$$
x=\frac{c}{b},\ \ y=ac,\ \ z=a-ac-1
$$
applied to a family $\{f_{9-1}=\lambda\}$ and a multiplication on a denominator give a family
$$
a^3b=(\lambda bc-b^2-c^2)(a-ac-1).
$$
The embedding $\Spec \TT[a,b,c]\hookrightarrow \PP[a:b:c:d]$ gives a compactification
to a family of singular quartics over $\Aff^1$.

One has
$$
f_{10-1}=\frac{(x+y+1)^6}{xy^2}+z+\frac{1}{z}.
$$
A change of variables
$$
x=\frac{1}{b_1}-\frac{1}{b_1^2b_2}-1,\ \ y=\frac{1}{b_1^2b_2},\ \ z=a_1
$$
applied to a family $\{f_{10-1}=\lambda\}$ and a multiplication on a denominator give a family
$$
a_1b_2^3=(\lambda a_1-a_1^2-1)(b_1b_2-b_1^2b_2-1).
$$
The embedding $\Spec \TT[a_1,b_1,b_2]\hookrightarrow \PP[a_0:a_1]\times\PP[b_0:b_1:b_2]$ gives a compactification
to a family of singular hypersurfaces of bidegree $(2,3)$ in $\PP^1\times \PP^2$ over $\Aff^1$.

In all cases total spaces of the families have crepant resolutions.
\end{proof}

In some cases the Calabi--Yau compactification can be constructed in another way, using multipotential technique (see, for example,~\cite{KP12}) and elliptic fibrations.

\begin{proposition}[A.\,Harder]
\label{proposition:harder}
The polynomial $f_{10-1}$ satisfies the Calabi--Yau condition.
\end{proposition}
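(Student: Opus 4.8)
The plan is to exhibit $f_{10-1}$ as an elliptic fibration over $\Aff^1$ in two different ways and to use a multipotential trick to manufacture a Calabi--Yau compactification directly, bypassing the toric resolution of Proposition~\ref{proposition:CY compactification}. Recall that
$$
f_{10-1}=\frac{(x+y+1)^6}{xy^2}+z+\frac{1}{z},
$$
so the Laurent polynomial splits as $g(x,y)+h(z)$ with $g=(x+y+1)^6/(xy^2)$ and $h=z+1/z$. First I would treat $g$ and $h$ separately: each of $g\colon(\CC^*)^2\to\CC$ and $h\colon\CC^*\to\CC$ compactifies (the first to a pencil of plane cubics after a Minkowski-type compactification of its Newton triangle $N(g)$, the second trivially to $\PP^1$), and the fibers of $g$ are elliptic curves while the fibers of $h$ are pairs of points. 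Then the fibration $f_{10-1}=\lambda$ fibers over the value plane of $g$ (equivalently of $\mu:=\lambda - (z+1/z)$), so that $Z$ acquires the structure of an elliptic surface bundle: the general fiber of $f_{10-1}$ is itself an elliptic fibration over $\PP^1$ obtained by pulling back the pencil $\{g=\mu\}$ along the double cover $z\mapsto z+1/z$.

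Concretely, I would perform the change of variables already recorded in the proof of Proposition~\ref{proposition:CY hyperelliptic},
$$
x=\frac{1}{b_1}-\frac{1}{b_1^2b_2}-1,\qquad y=\frac{1}{b_1^2b_2},\qquad z=a_1,
$$
which presents $\{f_{10-1}=\lambda\}$ as the family
$$
a_1b_2^3=(\lambda a_1-a_1^2-1)(b_1b_2-b_1^2b_2-1)
$$
of $(2,3)$-hypersurfaces in $\PP^1\times\PP^2$ over $\Aff^1[\lambda]$. The key point is that the projection to $\PP^1[a_0:a_1]$ exhibits each of these threefolds, and the total space, as an elliptic fibration: fixing $a_1$ one gets a cubic in $(b_0:b_1:b_2)$. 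Using this elliptic structure together with the multipotential technique of~\cite{KP12} — writing $f_{10-1}$ as a sum of the ``two-dimensional potential'' $g$ and the ``one-dimensional potential'' $h$, and compactifying the associated elliptic pencil — I would produce a smooth relative compactification on which the canonical class is the fiber over infinity. The relevant singularities of the naive total space are, as noted in the previous proof, $cA_n$-type strata (du Val along curves) together with ordinary double points; the elliptic-fibration viewpoint lets one resolve these crepantly fiberwise, the nodes admitting small resolutions compatible with the elliptic structure.

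\textbf{Main obstacle.} The hard part will be checking that the fiberwise (crepant) resolutions of the elliptic total space glue to a \emph{global} smooth model $Z$ with $-K_Z=f^{-1}(\infty)$ — i.e. controlling the monodromy and the singular fibers of the elliptic fibration $Z\to\PP^1[a_0:a_1]$ well enough to see that all the singularities of the $(2,3)$-family are du Val of type $A_n$ along sections or ordinary double points, and that the required small resolutions can be chosen simultaneously (projectively, or at least so that the result is Kähler). Once the elliptic fibration is put in a suitable normal form (say Weierstrass form with a resolved discriminant), this becomes a finite combinatorial check on Kodaira fiber types, and the equality $-K_Z=f^{-1}(\infty)$ follows as in Compactification construction~\ref{compactification construction} because each blow-up is along a curve of multiplicity one in the boundary and hence crepant relative to the total family. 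This gives a construction genuinely different from that of Proposition~\ref{proposition:CY hyperelliptic}, exhibiting $Y=Z\setminus f^{-1}(\infty)$ as a Calabi--Yau compactification of $f_{10-1}$.
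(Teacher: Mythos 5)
Your overall strategy---viewing the compactified family as an elliptic fibration over a surface base (the value $\lambda$ of the potential together with the compactified $z$-line) and exploiting the splitting $f_{10-1}=g(x,y)+(z+\tfrac1z)$---is the same route the paper takes, but as written your argument has a genuine gap exactly where the actual work lies. You never determine the singularities of the total space: you import the singularity description from the proof of Proposition~\ref{proposition:CY hyperelliptic} (du Val of type $A_n$ along curves plus ordinary double points, to be removed by small resolutions) and then flag as ``the main obstacle'' both the verification of this and the question of whether the small resolutions can be chosen simultaneously and projectively (or K\"ahler). In the paper's argument (due to Harder) none of this is left open: the family is put in an explicit Weierstrass form $Y^2=X^3+f_2X+f_3$ over $B=\Aff[w]\times\PP[s_0:s_1]$, the discriminant is computed, and the singular locus of the total space is identified as a curve of du Val singularities of type $E_8$ lying over $\{2w^2s_1^2-3ws_0s_1+s_0^2+s_1^2=0\}$; this is resolved crepantly by blowing up $8$ times, and no small resolutions (hence no K\"ahlerness issue) enter at all. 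So the singularity analysis you postpone is not a routine ``finite combinatorial check'' to be waved through---it is the content of the proof, and your guessed list of singularity types is not what one finds in the natural elliptic model.

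Second, your justification of the canonical class is not the right tool here. You claim the equality of the anticanonical class with the fiber over infinity ``follows as in Compactification construction~\ref{compactification construction}'' because the blow ups are along multiplicity-one curves in the boundary; but in the elliptic-fibration picture one resolves singularities of the total space lying over an interior curve of $B$, not a base locus contained in a boundary anticanonical divisor, so that crepancy argument does not apply. What the paper actually uses is Miranda's canonical bundle formula for the resolved elliptic threefold, $K_Y=g^*(K_B+L)$, where $L$ is the pullback to $B$ of a section of $\cO_{\PP^1}(2)$, so that $K_B+L$, and hence $K_Y$, is trivial. Note also that the proposition asserts only the Calabi--Yau condition, i.e.\ a smooth relatively compact $Y$ over $\Aff^1$ with trivial canonical class; your stronger aim of a log Calabi--Yau compactification over $\PP^1$ with $-K_Z=f^{-1}(\infty)$ is additional and is likewise not established by your sketch. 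To turn your plan into a proof you would need to carry out the Weierstrass (or equivalent) computation, identify and crepantly resolve the actual singular locus, and then verify triviality of the canonical class by the canonical bundle formula rather than by analogy with the surface construction.
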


\begin{proof}
Consider a surface $B=\Aff[w]\times \PP[s_0:s_1]$. 
Compactify the family given by $f_{10-1}$ to a family of elliptic curves over $B$ so that the projection onto $\Aff^1$ gives
(the partial compactification of) the initial family.
This Weierstrass fibration can be given by the equation
$$
Y^2 = X^3 + f_2 X + f_3$$
with
$$
f_2 =-\frac{1}{3}  \left(2w^2s_1^2 - 3w s_0 s_1+s_0^2 + s_1^2\right)^4,
$$
$$
f_3 = \frac{2}{27} \left(2w^2s_1^2 - 3w s_0 s_1 - 864 w s_1^2 + s_0^2 + 864 s_0s_1 + s_1^2\right) \left(2 w^2s_1^2 - 3 w s_0 s_1 + s_0^2 + s_1^2\right)^5.
$$
This fibration is singular and has degeneracy locus over $B$ given by the equation
\begin{multline*}
s_1 \left(w s_1 - s_0\right) \left(2w^2 s_1^2 - 3w s_0 s_1 - 432 w s_1^2 + s_0^2 + 432 s_0 s_1 + s_1^2\right)\cdot\\
\cdot \left(2 w^2 s_1^2 - 3w s_0 s_1 + s_0^2 + s_1^2\right)^{10} = 0.
\end{multline*}
Each component of this singular locus is a smooth curve in $B$. The singularities in the total space of this fibration are in the fibers over the curve given by
$$
2w^2 s_1^2 - 3w s_0 s_1 + s_0^2 + s_1^2 = 0.
$$
Above this curve, we get a curve of du Val singularities of type $E_8$. These singularities can be resolved by blowing up $8$ times. This gives a smooth variety $Y$ which is relatively compact fibered over $\Aff^1$. To see that this resolution is actually a Calabi--Yau variety, one can use the canonical bundle formula (\cite[p. 132]{Mi83}). The equation is basically
$K_Y = g^*(K_{B} + L)$,
where $g$ is a map $Y\to B$, and $L$ is a divisor on the base of the fibration, which is in this case is the pullback from $\PP^1$ to $B$ of section of $\cO_{\PP^1}(2)$. Therefore, $K_Y$ is the pullback of the trivial bundle on $B$, hence is itself trivial. Thus $Y$ is a Calabi--Yau compactification of the family given by $f_{10-1}$.
\end{proof}

Summarizing~\cite{Prz13}, Theorem~\ref{theorem:main}, Proposition~\ref{proposition:CY hyperelliptic}, and forthcoming papers~\cite{DHKLP} and~\cite{DHKLP},
one gets the following assertion.

\begin{corollary}
\label{corollary: main}
A pair of a smooth Fano threefold $X$ and a trivial divisor on it has a toric Landau--Ginzburg model.
Moreover, if $-K_X$ is very ample, then any Minkowski Laurent polynomial satisfying the period condition for $(X,0)$
is a toric Landau--Ginzburg model.
\end{corollary}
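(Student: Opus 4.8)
The plan is to assemble Corollary~\ref{corollary: main} by combining the four ingredients already in place, keeping careful track of which of the $105$ families is handled by which ingredient. First I would recall the trichotomy from Remark~\ref{remark: cDV threefolds}: $98$ of the families admit Gorenstein (reflexive) toric degenerations whose fan polytopes are Newton polytopes of Minkowski Laurent polynomials satisfying the period condition for the trivial divisor (this is the content of~\cite{fanosearch} and~\cite{CCGK16}), and the remaining seven are $X_{1-1}$, $X_{1-11}$, $X_{2-1}$, $X_{2-2}$, $X_{2-3}$, $X_{9-1}$, $X_{10-1}$. For each of the $98$ families, I would fix such a Minkowski Laurent polynomial $f_X$. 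The period condition holds by hypothesis; the toric condition --- that $X$ degenerates to the toric variety $T$ with $F(T)=N(f_X)$ --- is supplied by the forthcoming papers~\cite{IKKPS} and~\cite{DHKLP}; and the Calabi--Yau condition is exactly Theorem~\ref{theorem:main}, which produces a log Calabi--Yau (hence a fortiori a Calabi--Yau) compactification of the pencil of fibers of $f_X$. Thus all three conditions of Definition~\ref{definition: toric LG} are met, so $f_X$ is a toric Landau--Ginzburg model for $(X,0)$; and since any Minkowski polynomial with the correct Newton polytope satisfies the period condition in these cases, the same argument applies to all of them, which gives the ``moreover'' clause for the very ample anticanonical cases.

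Next I would dispose of the seven exceptional families. For $X_{1-1}$ and $X_{1-11}$ --- the Fano threefolds of Picard rank one whose anticanonical class is not very ample --- the existence of a toric Landau--Ginzburg model is already established in~\cite{Prz13}, so I would simply cite that. For the remaining five, $X_{2-1}$, $X_{2-2}$, $X_{2-3}$, $X_{9-1}$, $X_{10-1}$, the statement is Proposition~\ref{proposition:CY hyperelliptic}: there one writes down explicit Laurent polynomials $f_{i-j}$ obtained from Givental-type Landau--Ginzburg models, verifies the period condition via~\cite{fanosearch}, invokes~\cite{IKKPS} and~\cite{DHKLP} for the toric condition, and checks the Calabi--Yau condition ``by hand'' by compactifying the family to quartics in $\PP^3$ or to $(2,3)$-hypersurfaces in $\PP^1\times\PP^2$ and taking crepant resolutions (with Proposition~\ref{proposition:harder} giving an alternative argument for $f_{10-1}$). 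Since $98+2+5=105$, every family of smooth Fano threefolds is accounted for, which proves the first sentence of the corollary.

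Finally, for the ``moreover'' clause I would emphasize that the argument in the first paragraph used nothing about $f_X$ beyond: (i) $N(f_X)$ is a reflexive Minkowski polytope, which is automatic once $f_X$ is a Minkowski Laurent polynomial; (ii) $f_X$ satisfies the period condition for $(X,0)$, which is the standing hypothesis; and (iii) $N(f_X)$ is the fan polytope of a toric degeneration of $X$ --- but the Minkowski polytopes arising for these $98$ families are precisely the fan polytopes of the Gorenstein toric degenerations classified in~\cite{CCGK16}, and~\cite{IKKPS}, \cite{DHKLP} confirm these are genuine degenerations of the corresponding $X$. Hence \emph{any} Minkowski Laurent polynomial satisfying the period condition for $(X,0)$ is automatically a toric Landau--Ginzburg model for $(X,0)$, since Theorem~\ref{theorem:main} supplies the Calabi--Yau condition uniformly. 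The only real subtlety --- and the step I expect to require the most care --- is the bookkeeping at the interface between the toric-geometry input (which $98$ families, and with which polytopes) and the mirror-symmetry input (which polynomials realize the period condition), i.e.\ making sure the Minkowski polytopes that Theorem~\ref{theorem:main} is applied to are exactly the ones certified as toric degenerations; everything else is a matter of citing Theorem~\ref{theorem:main}, Proposition~\ref{proposition:CY hyperelliptic}, \cite{Prz13}, and the forthcoming~\cite{IKKPS}, \cite{DHKLP}.
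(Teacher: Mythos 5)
Your proposal matches the paper's own argument: the corollary is obtained exactly by assembling Theorem~\ref{theorem:main} (Calabi--Yau condition for the $98$ families with Minkowski weak Landau--Ginzburg models), the toric condition from the forthcoming papers~\cite{IKKPS} and~\cite{DHKLP}, the two rank-one cases $X_{1-1}$, $X_{1-11}$ from~\cite{Prz13}, and Proposition~\ref{proposition:CY hyperelliptic} for the remaining five varieties, with the ``moreover'' clause following because any Minkowski polynomial satisfying the period condition meets all three conditions of Definition~\ref{definition: toric LG}. Your write-up is simply a more explicit bookkeeping of the same decomposition ($98+2+5=105$) that the paper summarizes in one sentence.
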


\begin{remark}
Let us recall that $\widetilde T$ is a smooth toric variety with $F(\widetilde T)=\Delta$.
Let $f$ be a \emph{general} Laurent polynomial with $N(f)=\Delta$. The Laurent polynomial $f$ is a toric Landau--Ginzburg model for a pair $(\widetilde T,D)$, where $D$ is a general $\CC$-divisor on $\widetilde T$.
Indeed, the period condition for it is satisfied by~\cite{Gi97}.
Following the compactification procedure described in the proof of Theorem~\ref{theorem:main}, one can see that the base locus $B$ is a union of smooth transversally intersecting curves (not necessary rational). This means that in the same way as above the statement of Theorem~\ref{theorem:main} holds for $f$, so that $f$
satisfies the Calabi--Yau condition (cf.~\cite{Harder}). Finally the toric condition holds for $f$ tautologically. Thus $f$ is a toric Landau--Ginzburg model for $(\widetilde T,D)$.
\end{remark}

\begin{problem}
Prove this for smooth Fano threefolds and any divisor. A description of Laurent polynomials for all Fano threefolds and
any divisor is contained in~\cite{DHKLP}.
\end{problem}

\begin{question}
Is it true that the Calabi--Yau condition follows from the period and the toric ones?
If not, what conditions should be put on a Laurent polynomial to hold the implication?
\end{question}

Another advantage of the compactification procedure described in Theorem~\ref{theorem:main} is that
it enables one to describe
``fibers of compactified toric Landau--Ginzburg models over infinity''. These fibers play an important role, say,
for computing Landau--Ginzburg Hodge numbers, see~\cite{KKP14} and~\cite{LP16} for detailed studying of the del Pezzo case. We summarize these considerations in the following assertion.

\begin{corollary}[{cf.~\cite[Conjecture 2.3.13]{Harder}}]
\label{proposition: fibers over infinity}
Let $f$ be a Minkowski Laurent polynomial. 
Let $\widetilde T^\vee$ be a (smooth) maximally triangulated toric variety such that $F(\widetilde T^\vee)=N(f)$,
and let $D$ be a boundary divisor of $\widetilde T^\vee$.
There is a log Calabi--Yau compactification $f\colon Z\to \PP^1$ with
$-K_Z=f^{-1}(\infty)=D$. In particular, $D$ consists of $\frac{\left(-K_{T_{N(f)}}\right)^3}{2}+2$ components combinatorially given by
a triangulation of a sphere. (This means that vertices of the triangulation correspond to components of $D$, edges correspond to intersections
of the components, and triangles correspond to triple intersection points of the components.)
\end{corollary}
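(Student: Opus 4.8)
The plan is to extract the description of the member over infinity straight from the compactification built in the proof of Theorem~\ref{theorem:main}, and then to count its components by combining the smoothness of $\widetilde T^\vee$ (Lemma~\ref{lemma: 3dim smoothness}) with Euler's formula for the $2$-sphere. First I would recall that in that proof the compactified pencil $\{\lambda_0f=\lambda_1\}$ on $T^\vee$ is generated by a general anticanonical section $R_f$ and by the member attached to the constant Laurent polynomial, which is the reduced toric boundary; pulling this pencil back along the crepant resolution $\widetilde T^\vee\to T^\vee$ of Lemma~\ref{lemma: 3dim smoothness}, the member over $\infty$ becomes the toric boundary $D$ of $\widetilde T^\vee$, whose components, double curves and triple points are indexed by the rays, the two-dimensional cones and the three-dimensional cones of the fan of $\widetilde T^\vee$, equivalently by the vertices, edges and triangles of the maximal triangulation of $\partial\nabla$, which is a triangulation of the $2$-sphere. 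Next I would check that resolving the base locus leaves this intact. The base locus is $B=R_f\cap D=\bigcup_Q R_{Q,f}$ over the facets $Q$ of $\Delta=N(f)$, a union of smooth rational curves by Lemma~\ref{lemma:restrictions Minkowski}; it contains no torus-fixed point by Fact~$4$, since the vertex coefficients of $f$ are non-zero, and for the same reason $f|_Q\neq 0$ on each edge $Q$ of $\Delta$, so $B$ contains no one-dimensional stratum either, and hence $D_1\cap D_2\not\subset B$ for any two components $D_1,D_2$ of $D$. Thus Proposition~\ref{proposition:CY compactification} applies and produces $f\colon Z\to\PP^1$ with $-K_Z=f^{-1}(\infty)$; at each blow-up $\pi\colon W'\to W$ of a base curve $C$, since every member of the pencil contains $C$ with multiplicity $1$, the new member over $\infty$ equals $\pi^{*}D-E=D'$, the strict transform of $D$, so no exceptional divisor is ever absorbed into the fibre over $\infty$, the final $f^{-1}(\infty)$ is the iterated strict transform of $D$ (still denoted $D$), and, as the blown-up curves sit inside the successive boundary surfaces and meet their lower strata in finite sets only, the boundary components, their pairwise intersections and their triple points are preserved combinatorially. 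This gives the first assertion: $-K_Z=f^{-1}(\infty)=D$, with dual complex the triangulation of $\partial\nabla\cong S^2$.

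To count the components I would put $V$, $E$, $T$ for the numbers of vertices, edges and triangles of the maximal triangulation of $\partial\nabla$. By Lemma~\ref{lemma: 3dim smoothness} every triangle is unimodular in the lattice plane of the facet of $\nabla$ containing it, so the number of triangles lying in a facet is exactly its normalised area; summing over the facets, and using that $\nabla$ is reflexive so that each facet lies at lattice distance $1$ from the origin and the normalised volume of $\nabla$ is the sum of the normalised areas of its facets, one gets that $T$ equals the normalised volume of $\nabla$, which is $\big(-K_{T_{N(f)}}\big)^{3}$. Since $\partial\nabla\cong S^2$, Euler's formula gives $V-E+T=2$, and since every edge bounds exactly two triangles, $2E=3T$; eliminating $E$ yields $V=\tfrac{1}{2}T+2=\tfrac{1}{2}\big(-K_{T_{N(f)}}\big)^{3}+2$, the asserted number of components of $D$, combinatorially organised by the triangulation of the sphere.

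The step I expect to be the main obstacle is the verification in the first paragraph that the base-locus resolution does not change the combinatorial type of the member over $\infty$ --- that no exceptional divisor slips into a fibre and that the incidence pattern of the boundary components is left alone. Everything else is a matter of assembling Theorem~\ref{theorem:main}, Lemma~\ref{lemma: 3dim smoothness}, Lemma~\ref{lemma:restrictions Minkowski}, Proposition~\ref{proposition:CY compactification} and Euler's formula for $S^2$.
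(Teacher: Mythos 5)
Your proposal is correct and follows essentially the same route as the paper: the identification $f^{-1}(\infty)=D$ comes from the compactification built in Theorem~\ref{theorem:main} (boundary member of the pencil, crepant resolution, strict transforms under the base-locus blow-ups), and the component count is exactly the paper's argument — $T$ equals the normalized volume of $\nabla$, hence $\left(-K_{T_{N(f)}}\right)^3$, combined with $V-E+T=2$ and $2E=3T$ for the triangulation of the sphere $\partial\nabla$. You in fact supply more detail than the paper (unimodularity of the triangles via Lemma~\ref{lemma: 3dim smoothness} and the check that the resolution of the base locus does not alter the combinatorics of the boundary), with only a harmless imprecision in saying every member contains a base curve with multiplicity $1$, where all that is needed is that $D$ itself is reduced.
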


\begin{proof}
Let $v$ be a number of vertices in a triangulation of $\nabla$; in other words, $v$ is a number of integral points on the boundary of $\nabla$,
or, the same, the number of components of $D$. Let $e$ be a number of edges in the triangulation of $\nabla$, and let $f$ be a number
of triangles in the triangulation. As the triangulation is a triangulation of a sphere, one has $v-e+f=2$.
On the other hand one has $2e=3f$. This means that $v=\frac{f}{2}+2$. The assertion of the corollary follows from the fact that
both $\left(-K_{T_{N(f)}}\right)^3$ and $f$ are equal to a normalized volume of $\nabla$.
\end{proof}

\begin{remark}
  Let $g=\frac{\left(-K_{X}\right)^3}{2}+1$ be the genus of Fano threefold $X$; in particular, $D$ consists of $g+1$ components.
Then one has $g+1=\dim |-K_X|$.
\end{remark}

\begin{remark}
The description of fibers of Landau--Ginzburg models over infinity as boundary divisors fits well to Mirror Symmetry considerations
from the point of view of~\cite{CKP13} and~\cite{IKKPS}. In these papers Fano varieties and their Landau--Ginzburg models are connected,
via their toric degenerations, by elementary transformations called \emph{basic links}. From our point of view they are given
by elementary subtriangulations of a sphere of boundary divisors.
\end{remark}

General fibers of compactified toric Landau--Ginzburg models for Fano threefolds are smooth K3 surfaces.
However some of them can be singular and even reducible.
Our considerations give almost no information about them; however singular fibers of Landau--Ginzburg models
are of special interest --- they contain information about derived categories of singularities.
There is a lack of examples of computing these categories.
More computable invariant is a number of components of reducible fibers.

\begin{conjecture}[{\cite[Conjecture 1.1]{PSh15}, see also~\cite{GKR12}}]
\label{conjecture:components}
Let $X$ be a smooth Fano variety of dimension $n$. Let $f_X$ be its toric Landau--Ginzburg model corresponding to the trivial divisor on $X$.
Let $k_{f_X}$ be a number of all components of all reducible fibers (without multiplicities) of a Calabi--Yau compactification
for $f_X$ minus the number of reducible fibers. One has
$$
h^{1,n-1}(X)=k_{f_X}.
$$
\end{conjecture}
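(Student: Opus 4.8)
Since the displayed statement is a conjecture, what follows is only a strategy, and I restrict attention to smooth Fano threefolds $X$ with the trivial divisor, where Theorem~\ref{theorem:main} and Proposition~\ref{proposition:CY hyperelliptic} already supply a concrete log Calabi--Yau compactification $f\colon Z\to\PP^1$ whose general fibre $S$ is a K3 surface. Here the Calabi--Yau compactification is $Y=Z\setminus f^{-1}(\infty)$, so the reducible fibres entering $k_{f_X}$ are exactly those of $f$ over finite points $\lambda\in\Aff^1$; by Corollary~\ref{proposition: fibers over infinity} the fibre over infinity is a triangulated configuration of $g+1$ components with $g=(-K_X)^3/2+1$, and it is \emph{not} counted. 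The plan is to compute both sides of the asserted equality in Hodge-theoretic terms attached to the fibration $f\colon Z\to\PP^1$ and to match them through the K3 surface $S$.

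For the left-hand side I would invoke mirror symmetry of Hodge numbers for compactified Landau--Ginzburg models, in the form of~\cite{KKP14} (established in many cases), which identifies $h^{1,2}(X)$ with an explicit Landau--Ginzburg Hodge number of $f\colon Z\to\PP^1$, namely a graded piece of the cohomology of the pair $\bigl(Z,f^{-1}(t)\bigr)$ for a general fibre $f^{-1}(t)$. Using the Leray spectral sequence of $f$ together with the long exact sequence of the pair, that graded piece receives a contribution from the ``transcendental'' (non-monodromy-invariant) part of $R^2f_\ast\QQ$ --- the part detected by the Picard--Fuchs operator of Theorem~\ref{theorem: Picard--Fuchs} --- and a contribution supported on the reducible fibres over finite points; the first contribution is uniform in nature, and the claim reduces to showing that the second is precisely $k_{f_X}$.

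In parallel I would rewrite $k_{f_X}$ as a rank inside $H^{1,1}(Z)$ by a Shioda--Tate--type identity for the K3 fibration: each component of a reducible fibre over a finite point is a divisor class on $Z$, and, together with a fibre class, the monodromy-invariant subspace $M\subset H^2(S,\QQ)$ --- which by the global invariant cycle theorem is the image of $H^2(Z,\QQ)$, and whose rank one can pin down from the combinatorics of $N(f_X)$ (equivalently, by Dolgachev--Nikulin mirror symmetry, from the Picard lattice of the anticanonical K3 surface of $X$) --- and the $g+1$ components of $f^{-1}(\infty)$, these classes span $H^{1,1}(Z)$. Matching the resulting relation with the computation of the previous paragraph is what should yield $h^{1,2}(X)=k_{f_X}$. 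A second, perhaps sturdier, route is induction along \emph{basic links} (cf.\ the remark after Corollary~\ref{proposition: fibers over infinity} and~\cite{IKKPS}): one starts from the Fano threefolds with $h^{1,2}=0$, whose compactified models should carry no reducible fibres over finite points, and shows that each basic link --- an elementary subtriangulation of the sphere of boundary divisors --- changes $h^{1,2}(X)$ and $k_{f_X}$ by the same amount.

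I expect the bookkeeping to be the main obstacle, in either route. Concretely: one must establish a Torelli/Noether--Lefschetz--type input, namely that $Z$ carries no divisor classes beyond the combinatorially visible ones, to license the Shioda--Tate count; one must control, uniformly over all families, the contribution of $f^{-1}(\infty)$ and of any linear relations among the fibre components on $Z$; and, most delicately, the Laurent polynomial $f_X$ attached to the trivial divisor carries the special symmetric (binomial) coefficients of Remark~\ref{remark:canonical_the_same}, so one must understand the discriminant in $\lambda$ and the exact fibre types for \emph{that} polynomial rather than for a generic one with the same Newton polytope --- and it is precisely those non-generic coefficients that are expected to create the reducible fibres over finite points in the first place.
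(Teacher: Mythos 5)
The statement you are proving is a \emph{conjecture} in this paper: the author does not prove it here, but only records that it is known for Fano threefolds of Picard rank one (\cite{Prz13}) and for complete intersections (\cite{PSh15}), and explicitly poses the general threefold case as an open problem. So there is no proof in the paper to compare yours against; the closest thing is the remark following the conjecture, which sketches a much more elementary, case-by-case route: for the threefolds with cDV-type toric degenerations one keeps track, through the resolution procedure of Proposition~\ref{proposition:CY compactification} and Theorem~\ref{theorem:main}, of the multiplicities of the base curves and the local behaviour of their intersections, and counts the resulting components of reducible fibres directly, in the style of the Resolution Procedure of~\cite{PSh15}. That is a combinatorial bookkeeping argument over the finitely many families, not a Hodge-theoretic one.

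Your proposal is a reasonable strategy sketch, and you correctly observe that only the fibres over finite points enter $k_{f_X}$, but as written it has genuine gaps rather than being a proof. First, the key input on the left-hand side --- that the~\cite{KKP14}-type Landau--Ginzburg Hodge number of the pair $\bigl(Z,f^{-1}(t)\bigr)$ equals $h^{1,2}(X)$ --- is itself conjectural at the generality you need; invoking it essentially assumes a statement of the same depth as the one to be proved. Second, the Shioda--Tate-type count requires the Noether--Lefschetz input you flag yourself (no divisor classes on $Z$ beyond the fibre components, the sections/exceptional divisors, and the monodromy-invariant part), and this is exactly where the special binomial coefficients of the Minkowski polynomial matter; nothing in the paper (or in your sketch) controls $\rho(Z)$ or the rank of the invariant part $M\subset H^2(S,\QQ)$ for these non-generic pencils, and appealing to Dolgachev--Nikulin mirror symmetry here is again an unproved transfer. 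Third, the basic-link induction would need the precise statement that each link changes $h^{1,2}$ and $k_{f_X}$ by equal amounts, which is not established in~\cite{IKKPS} or here and is essentially a reformulation of the conjecture itself. In short: your outline identifies the right objects, but each of its load-bearing steps is currently an open statement, whereas the route the author has in mind (and the route used in the known cases) is the explicit fibre-by-fibre computation from the compactification construction.
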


This conjecture is proven for Fano threefolds of rank one (see~\cite{Prz13}) and for complete intersections (see~\cite{PSh15}).

\begin{problem}
Prove Conjecture~\ref{conjecture:components} for all Fano threefolds.
\end{problem}

\begin{remark}
Most of Fano threefolds have ``simple'' toric degenerations, say, degenerations to toric varieties with cDV singularities
(combinatorially this means that their fan polytopes have, except for the origin, integral points only on edges).
In these particular cases one can keep track of the exceptional divisors appearing at the resolution procedure
described in Proposition~\ref{proposition:CY compactification} and Theorem~\ref{theorem:main}.
That is, one can compute multiplicities of the base curves (each multiplicity greater than $1$ gives exceptional
divisors in fibers) and a local behavior of their intersections. Then, in a way similar to~\cite[Resolution Procedure 4.4]{PSh15},
one can compute the required number of components.
\end{remark}

\end{document}